\newcommand\Tstrut{\rule{0pt}{2.6ex}}         
\newcommand\Bstrut{\rule[-0.9ex]{0pt}{0pt}}   
\newcommand{\C}{\mathbb{C}}
\newcommand{\R}{\mathbb{R}}
\newcommand{\Z}{\mathcal{Z}}
\newcommand{\N}{\mathbb{N}}
\newtheorem{theorem}{Theorem}[section]
\newtheorem{lemma}{Lemma}[section]
\theoremstyle{definition}
\newtheorem*{definition}{Definition}
\theoremstyle{definition}
\newtheorem{example}{Example}
\theoremstyle{remark}
\newtheorem*{remark}{Remark}
\providecommand{\keywords}[1]
{
  \small	
  \textbf{\textit{Keywords---}} #1
}
\title{A pseudo-spectral Strang splitting method for linear dispersive problems with transparent boundary conditions}
\author[1]{L. Einkemmer}
\author[1]{ A. Ostermann}
\author[1]{M. Residori}
\affil[1]{Department of Mathematics, University of Innsbruck}
\begin{document}
\maketitle
\begin{abstract}
The present work proposes a second-order time splitting scheme for a linear dispersive equation with a variable advection coefficient subject to transparent boundary conditions. For its spatial discretization, a dual Petrov--Galerkin method is considered which gives spectral accuracy.
The main difficulty in constructing a second-order splitting scheme in such a situation lies in the compatibility condition at the boundaries of the sub-problems. In particular, the presence of an inflow boundary condition in the advection part results in order reduction. To overcome this issue a modified Strang splitting scheme is introduced that retains second-order accuracy. For this numerical scheme a stability analysis is conducted. In addition, numerical results are shown to support the theoretical derivations.
\end{abstract}

\keywords{splitting methods, transparent boundary conditions, pseudo-spectral methods}
\section{Introduction}
\label{intro}
The aim of this paper is to develop time splitting schemes in combination with transparent boundary conditions that have spectral accuracy in space. Splitting schemes are based on the divide and conquer idea; i.e. to divide the original problem into smaller sub-problems which are, hopefully, easier to solve. However, obtaining an approximation of the solution of the original problem from the solutions of the sub-problems is not always straightforward: order reductions or strong CFL conditions that destroy the convergence of the numerical scheme are known to arise, see e.g.~\cite{einkemmer14,einkemmer16,nakano19}. Furthermore, transparent boundary conditions are non-local in time and depend on the solution. Imposing them with splitting methods poses a challenge in the derivation of stable numerical schemes of order higher than one.

In this paper, we show that it is possible to construct a second order splitting scheme that performs well, in the context outlined above, and can be implemented efficiently. In particular, we show that the proposed numerical method is stable independent of the space grid spacing (i.e.~no CFL type condition is needed). We focus our attention on a linearised version of the Korteweg--de Vries equation
\begin{equation}
\label{eq1}
\begin{cases}
\partial_t u (t,x)+ g(x) \partial_x u(t,x) + \partial_x^3 u(t,x) = 0, \quad (t,x)\in [0,T]\times \R,\\
 u(0,x) = u^0(x),
\end{cases}
\end{equation}
where $T>0$. The same ideas, however, can be applied to a more general set of linear partial differential equations with variable coefficients.
Note that the partial differential equation~\eqref{eq1}, despite being linear, finds many applications in a physical context. For example, it is used to model long waves in shallow water over an uneven bottom, see e.g.~\cite{kakutani71,whitham74}.

The goal of this work is to design a splitting scheme that is second order in time with spectral accuracy in space. This paper can be seen as an extension to~\cite{residori20}, where a splitting scheme of order one in time and spectral accuracy in space is presented. When solving~\eqref{eq1} one of the main difficulties one has to face is the unbounded domain $\R$. Numerical simulations typically consider a finite domain that leads to boundary conditions. Our goal is to design a numerical scheme that retains the same dynamics as the original problem~\eqref{eq1}, but on a finite domain. This can be achieved by imposing transparent boundary conditions. The advantage of such boundary conditions is the zero-reflection property of the solution at the boundaries. Further, the solution can leave the finite domain and re-enter at a later time without any loss of information. On the downside, transparent boundary conditions are non-local in time (and space for two and three-dimensional problems), therefore, they become expensive for long time simulations. In particular, memory requirements grow proportionally with the number of time steps. While it is still possible to employ them in 1D, the multidimensional cases become impracticable. A remedy is to approximate transparent boundary conditions and obtain so-called absorbing boundary conditions. In this way, information at the boundaries is lost, but memory requirements remain constant. A lot of work has been done for the Schr\"odinger equation in recent years, see \cite{antoine08,arnold03,bertoli17} and references therein. For third-order problems, we refer the reader to~\cite{besse16,besse16a,residori20,zheng08} and references therein. 

In the present case, the third derivative in space renders any explicit integrator extremely expensive. Therefore, an implicit scheme should be implemented. While coupling an implicit time discretization with a spectral space discretization yields banded matrices for constant advection, they lead to full matrices if $g$ varies in space. We therefore employ a time-splitting approach in order to separate the advection problem from the dispersive problem. Operator splitting methods for dispersive problems have been employed and studied before, we refer the reader to~\cite{einkemmer15,einkemmer18,residori20,holden11}. For splitting method with absorbing boundary conditions we cite the work~\cite{bertoli17}. Splitting methods allow us to design specific solvers for the variable coefficient problem. For example, \cite{shen07} uses a technique based on preconditioning. However, a direct splitting of~\eqref{eq1} is not advisable. The problem of separating the advection equation is the potential requirement of inflow conditions at the boundaries. The actual inflow, however is unknown and should be estimated for example by extrapolation methods. This leads to instabilities when spectral methods are applied unless a very restrictive CFL condition is satisfied. The idea to overcome this problem is to perform a modified splitting that allows us to treat the advection problem without prescribing any inflow condition. The boundary conditions are transferred to the dispersive problem only. In this case, we can compute the values we need with the help of the $\Z$-transform, as has been done for a constant coefficient dispersive problem in~\cite{besse16, besse16a}.
Another popular technique to avoid reflections at the boundaries is the perfectly matched layer method (PML). This method has been introduced in~\cite{berenger94} for Maxwell's equations. Subsequently, it has been adapted to the Schr\"odinger equation~\cite{zheng07} and very recently  a general PML approach in combination with pseudo-spectral methods has been proposed in~\cite{antoine20}. To the best of our knowledge a PML method for a linearised Korteweg--de Vries equation is currently not available. 

The paper is organized as follows. In Section 2 we derive the semi-discrete scheme, discrete in time and continuous in space, by applying the Strang splitting method. In Section~3 we impose transparent boundary conditions for the scheme derived in Section~2. In particular, we determine the proper values of the numerical solution at the boundaries with the help of the $\Z$-transform. The stability of the resulting numerical method is then analyzed in section 4. In Section 5 we describe a pseudo-spectral method for the spatial discretization which takes the transparent boundary conditions into account. Finally, in Section~6 we present  numerical results that illustrate the theory.
\section{Time discretization: modified splitting approach}
\label{timedisc}
In this section we derive a semi-discrete scheme by applying the Strang-splitting method to problem~\eqref{eq1} restricted to a finite interval $[a,b]$, where $a<b$. Inspired by the ideas in~\cite{residori20}, we perform a time splitting in order to separate the advection problem $\partial_t u(t,x) + g(x) \partial_x u(t,x) = 0$ from the dispersive problem $\partial_t u(t,x) + \partial_x^3 u(t,x) =0$.
In the following, for brevity, time and space dependence for the unknown $u=u(t,x)$ are omitted.

In Section~\ref{canspl} we present the canonical splitting of~\eqref{eq1}. This approach illustrates the difficulty to prescribe the inflow condition to the advection equation. In Section~\ref{modspl} we then propose the modified splitting and show how this problem can be avoided.

\subsection{Canonical splitting}
\label{canspl}
Before applying any splitting, a preliminary analysis shows us that the inflow conditions to the advection problem depend on the sign of $g(x)$ at $x=a$ and $x=b$. We summarise in Table~\ref{tab0} the four possible outcomes.
\begin{table}[b]
\small
\begin{center}
\begin{tabular}{c | c c}
 & $ g(a)>0 $ &  $g(a)\leq 0$ \Bstrut\\
\hline
$g(b)<0$ &  $a,b$ & $b$ \Tstrut\\
$g(b)\geq 0$  &  $a$ & -- \\
\end{tabular}
\caption{This table summarises at which boundary points $\{a,b\}$ the inflow condition needs  to be prescribed for the advection problem, depending on the sign of $g(x)$ at the boundaries.}
\label{tab0}
\end{center}
\end{table}
For this presentation, we restrict our attention to $g(x) > 0$ for $x\in [a,b]$. This setting requires an inflow condition at $x=a$. Let  $M\in \N$, $M>0$ be the number of time steps, $\tau = T/M$ the step size and $t^m = m\tau$, $k=0,\dots,M$. We apply the Strang splitting method to~\eqref{eq1}, which results in the two sub-problems
\begin{subequations}
\begin{align}
\label{eq41a}
& \begin{cases}
\partial_t v + \partial_x^3 v = 0,\\
 v(0,x) = v^0(x),\\
\end{cases} \\
& \label{eq41b}
\begin{cases}
\partial_t w + g\partial_x w  = 0,\\
 w(0,x) = w^0(x).\\
\end{cases}
\end{align}
\end{subequations}
Let $\varphi_{t}^{[1]}$ be the flow of \eqref{eq41a} and let $\varphi_{t}^{[2]}$ be the flow of \eqref{eq41b}. Let $u(t,x)$ be the solution of~\eqref{eq1} at time $t$. Then, the solution to~\eqref{eq1} at time $t + \tau$ is approximated by the Strang splitting
\begin{equation}
\label{eq42}
u(t+\tau, \cdot) \approx \varphi_{\frac{\tau}{2}}^{[1]} \circ \varphi_{\tau}^{[2]} \circ \varphi_{\frac{\tau}{2}}^{[1]}\left( u(t,\cdot)\right).
\end{equation}
In order to get a numerical scheme, we apply the Peaceman--Rachford scheme to \eqref{eq42}. This consists in computing the first flow $\varphi_{\frac{\tau}{2}}^{[1]}$ by the explicit Euler method, the middle flow $\varphi_{\tau}^{[2]}$ by the Crank--Nicolson method and the last flow by the implicit Euler method. Let $u^m(x) = u(t^m,x)$. Then, we get
\begin{align}
\label{eq43}
u^* &= \left(I-\frac{\tau}{2}\partial_x^3\right)u^m, \\
\label{eq44}
\left(I+\frac{\tau}{2}g\partial_x\right)u^{m+1/2} & = \left(I-\frac{\tau}{2}g\partial_x\right)u^*,\\
\label{eq45}
\left(I+\frac{\tau}{2}\partial_x^3\right)u^{m+1} &= u^{m+1/2}.
\end{align}
The latter numerical scheme is known to be second-order in time due to its symmetry.
 Notice that Equation~\eqref{eq44} is a time approximation of
\begin{equation}
\label{eq40}
\begin{cases}
\partial_t u + g\partial_x u = 0,\quad (t,x)\in[0,\tau]\times [a,b],\\
u(0,x) = u^*(x),\\
u(t,a) = f(t).\\
\end{cases}
\end{equation}
The function $f(t)$ encodes the inflow condition at $x=a$. For $t\in(0,\tau]$ the inflow condition is unknown. It can be approximated by extrapolation methods which typically leads to instabilities. The idea to overcome this problem is to formulate the advection problem without any inflow condition. For this purpose we introduce next a modified splitting.

\subsection{Modified splitting}
\label{modspl}
Based on the observations in Section~\ref{canspl}, we rewrite the governing equation in~\eqref{eq1} as follows
\begin{align*}
& \partial_t u + g(x) \partial_x u + \partial_x^3 u =
 \partial_t u + \left(g(x)-p_g(x) + p_g(x)\right) \partial_x u + \partial_x^3 u,
\end{align*}
where $p_g(x)$ is the line connecting the points $\left(a,g(a)\right)$ and $(b,g(b))$. We now apply a splitting method that results in the two sub-problems
\begin{subequations}
\begin{align}
\label{eq2a}
& \begin{cases}
\partial_t v + p_g(x)\partial_x v + \partial_x^3 v = 0,\\
 v(0,x) = v^0(x),\\
\end{cases} \\
& \label{eq2b}
\begin{cases}
\partial_t w + \left(g(x)-p_g(x)\right)\partial_x w  = 0,\\
 w(0,x) = w^0(x).\\
\end{cases}
\end{align}
\end{subequations}
Let $\varphi_{t}^{[1]}$ be the flow of \eqref{eq2a} and let $\varphi_{t}^{[2]}$ be the flow of \eqref{eq2b}. Let $u(t,x)$ be the solution of~\eqref{eq1} at time $t$. The solution to~\eqref{eq1} at time $t + \tau$ is then approximated by the Strang splitting
\begin{equation}
\label{eq3}
u(t+\tau, \cdot) \approx \varphi_{\frac{\tau}{2}}^{[1]} \circ \varphi_{\tau}^{[2]} \circ \varphi_{\frac{\tau}{2}}^{[1]}\left( u(t,\cdot)\right).
\end{equation}
By applying the Peaceman-Rachford scheme to \eqref{eq3}, we get
\begin{align}
\label{eq4}
u^* &= \left(I-\frac{\tau}{2}p_g(x)\partial_x-\frac{\tau}{2}\partial_x^3\right)u^m, \\
\label{eq5}
\left(I+\frac{\tau}{2}g^*\partial_x\right)u^{m+1/2} & = \left(I-\frac{\tau}{2}g^*\partial_x\right)u^*,\\
\label{eq6}
\left(I+\frac{\tau}{2}p_g\partial_x + \frac{\tau}{2}\partial_x^3\right)u^{m+1} &= u^{m+1/2},
\end{align}
where $g^*(x) = g(x) - p_g(x)$.
Notice that $g^*(a) = g^*(b) = 0$. This means that no inflow or outflow condition needs to be prescribed to Equation~\eqref{eq5}. The modified splitting allows us to solve the advection equation only for the interior points, i.e. $x\in(a,b)$.

\begin{remark}
    Notice that both problems~\eqref{eq2a}, \eqref{eq2b} have a variable coefficient advection. However, as shown in Section~\ref{spacedisc} the matrix associated to the space discretization of Problem~\eqref{eq2a}, despite the space dependent coefficient $p_g$, is still banded. This is a property of the spectral space discretization that we employ.
\end{remark}

\section{Discrete transparent boundary conditions}
\label{dtbcs}
When it comes to numerical simulations,  a finite spatial domain is typically considered. Problem~\eqref{eq1} is then transformed into the following boundary value problem
\begin{equation}
\label{eq7}
\begin{cases}
\partial_t u + g \partial_x u + \partial_x^3 u = 0, \quad (t,x)\in [0,T]\times (a,b),\\
 u(0,x) = u^0(x),\\
 u(t,x)|_{x=a} = u(t,a),\\
 u(t,x)|_{x=b} = u(t,b),\\
 \partial_x u(t,x)|_{x=b} = \partial_x u(t,b).
\end{cases}
\end{equation}
Due to the third order dispersion term, three boundary conditions are required. In particular, depending on the sign of the dispersion coefficient, we have either two boundary conditions at the right boundary and one at the left boundary or vice-versa. In this work we consider a positive dispersion coefficient.
We assume $g(x)$ constant for $x\in\R\setminus [a,b]$ and that $u^0(x)$ is a smooth initial value with compact support in $[a,b]$.
Transparent boundary conditions are established by considering~\eqref{eq7} on the complementary unbounded domain $\R\setminus(a,b)$. Let $g_{a,b}$ be the values of $g(x)$ in $(-\infty, a]$ and $[b,\infty)$, respectively. In the interval $(-\infty, a]$ we consider the problem
\begin{equation}
\label{eq8}
\begin{cases}
\partial_t u + g_a \partial_x u + \partial_x^3 u = 0, \quad (t,x)\in [0,T]\times (-\infty, a),\\
 u(0,x) = 0,\\
 u(t,x)|_{x=a} = u(t,a),\\
 \lim_{x\to-\infty} u(t,x) = 0,\\
\end{cases}
\end{equation}
whereas in the interval $[b,\infty)$ we consider the problem
\begin{equation}
\label{eq9}
\begin{cases}
\partial_t u + g_b \partial_x u + \partial_x^3 u = 0, \quad (t,x)\in [0,T]\times (b,\infty),\\
 u(0,x) = 0,\\
 u(t,x)|_{x=b} = u(t,b),\\
 \lim_{x\to+\infty} u(t,x) = 0.
\end{cases}
\end{equation}
The initial value $u(0,x)$ is set to $0$ because $u^0(x)$ has compact support in $[a,b]$. The boundary conditions at $x\to\pm \infty$ are set to $0$ because we ask for $u\in L^2(\R)$. Therefore, the solution $u$ must decay for $x\to\pm \infty$. We focus our attention on~\eqref{eq8} and  impose discrete transparent boundary conditions at $x=a$. A similar procedure can be applied to~\eqref{eq9}.

The mathematical tool we employ in order to impose discrete transparent boundary conditions to~\eqref{eq7} is the $\Z$-transform.  We recall the definition and the main properties of the $\Z$-transform, which are used extensively in this section. For more details we refer the reader to~\cite{arnold03}. The $\Z$-transform requires an \emph{equidistant} time discretization.
Given a sequence $\mathbf{u} = \{u^l\}_l$, its $\Z$-transform is defined by
\begin{equation}
\label{eq100}
\hat{u}(z):=\Z\left(\mathbf{u}\right)(z) = \sum_{l=0}^{\infty} z^{-l} u^l,\quad z\in\C,\,|z|>\rho\geq 1,
\end{equation}
where $\rho$ is the radius of convergence of the series.
The following properties hold
\begin{itemize}
\item[ ] \emph{Linearity}: for $\alpha,\beta\in\mathbb{R}$, $\mathcal{Z}(\alpha\mathbf{u}+\beta\mathbf{v})(z) = \alpha\hat{u}(z)+ \beta\hat{v}(z)$;
\item[ ] \emph{Time advance}: for $k>0$, $\mathcal{Z}(\{u^{l+k}\}_{l\geq0})(z) = z^k\hat{u}(z)-z^k\sum_{l=0}^{k-1}z^{-l}u^l$;
\item[ ] \emph{Convolution}: $\mathcal{Z}\big(\mathbf{u} *_d \mathbf{v}\big)(z) = \hat{u}(z)\hat{v}(z)$;
\end{itemize}
where $*_d$ denotes the discrete convolution
\[
(\mathbf{u} *_d \mathbf{v})^m:= \sum_{j=0}^m u^jv^{m-j},\quad m\geq 0.
\]

\begin{remark}
The Peaceman--Rachford scheme given in \eqref{eq4}--\eqref{eq6} reduces to a Crank--Nicolson scheme outside the computational domain $[a,b]$. Therefore, discrete transparent boundary conditions are derived discretizing~\eqref{eq8} by the Crank--Nicolson method.
\end{remark}

Discretizing \eqref{eq8} by the Crank--Nicolson method, gives
\begin{equation}
\label{eq10}
\left(I + \frac{\tau g_a}{2}\partial_x + \frac{\tau}{2}\partial_x^3\right)u^{m+1}(x) =
\left(I - \frac{\tau g_a}{2}\partial_x - \frac{\tau}{2}\partial_x^3\right)u^{m}(x),\quad u^0(x) = 0.
\end{equation}

Let $\mathbf{u}(x) = \{u^k(x)\}_k$ be the time sequence ($x$ plays the role of a parameter) associated to the Crank--Nicolson scheme~\eqref{eq10}. Then its $\Z$-transform is given by
\[
\hat{u}(x,z):=\Z\{\mathbf{u}(x)\}(z) = \sum_{l=0}^{\infty} u^l(x) z^{-l}.
\]
Taking the $\Z$-transform of~\eqref{eq10} gives
\begin{equation}
\label{eq11}
z\left(I + \frac{\tau g_a}{2}\partial_x + \frac{\tau}{2}\partial_x^3\right)\hat{u}(x) =
\left(I - \frac{\tau g_a}{2}\partial_x - \frac{\tau}{2}\partial_x^3\right)\hat{u}(x),\quad x\in (-\infty,a],
\end{equation}
where we used the time advance property of the $\Z$-transform and $u^0(x) =0$. In particular, \eqref{eq11} is an ODE in the variable $x$. It can be solved by using the ansatz
\[
\hat{u}(x,z) = c_1(z)\mathrm{e}^{\lambda_1(z)x} + c_2(z)\mathrm{e}^{\lambda_2(z)x} + c_3(z)\mathrm{e}^{\lambda_3(z)x},
\]
where $\lambda_i$, $i=1,2,3$ are the roots of the characteristic polynomial associated to \eqref{eq11}:
\[
\lambda^3 + g_a\lambda + \frac{2}{\tau}\frac{1-z^{-1}}{1+z^{-1}}.
\]
The roots $\lambda_i$ can be ordered such that $\mathrm{Re}\, \lambda_{1}<0$ and $\mathrm{Re}\, \lambda_{2,3}>0$, see \cite{besse16}. By the decay condition  $\hat{u}(x,z)\to 0$ for $x\to-\infty$, we obtain $c_1(z) = 0$ and
\begin{align}
\label{eq12}
\hat{u}(x,z) &= c_2(z) \mathrm{e}^{\lambda_2(z)x} + c_3(z)\mathrm{e}^{\lambda_3(z)x},\quad x\in(-\infty,a].
\end{align}
Since $c_2$ and $c_3$ are unknown, the way to compute the discrete transparent boundary conditions is to make use of the derivatives of $\hat{u}$ to derive an implicit formulation. Computing the first and second derivative of $\hat{u}$ gives
\begin{align*}
\partial_x\hat{u}(x,z) &= \lambda_2(z)c_2(z)\mathrm{e}^{\lambda_2(z)x} +  \lambda_3(z)c_3(z)\mathrm{e}^{\lambda_3(z)x},\\
\partial_x^2\hat{u}(x,z) &= \lambda^2_2(z)c_2(z)\mathrm{e}^{\lambda_2(z)x} +  \lambda^2_3(z)c_3(z)\mathrm{e}^{\lambda_3(z)x}.\\
\end{align*}
We then have
\begin{equation}
\label{eq13}
\partial_x^2\hat{u}(x) = \left(\lambda_2+\lambda_3\right)\partial_x\hat{u}(x) - \lambda_2\lambda_3 \hat{u}(x).
\end{equation}
In the latter equation the $z$ dependence is omitted. The roots $\lambda_i$, $i=1,2,3$ satisfy
\begin{align*}
\lambda_1 + \lambda_2 + \lambda_3 &= 0,\\
\lambda_1\lambda_2 + \lambda_1\lambda_3 + \lambda_2\lambda_3 &= g_a.
\end{align*}
This allows us to rewrite Equation~\eqref{eq13} in terms of the root $\lambda_1$ to obtain
\begin{equation}
\label{eq13a}
\partial_x^2\hat{u}(x)  + \lambda_1\partial_x\hat{u}(x) + (g_a + \lambda_1^2) \hat{u}(x) = 0.
\end{equation}
We can finally determine the value of $u^{m+1}(a)$ by evaluating \eqref{eq13a} at $x=a$ and taking the inverse $\Z$-transform. Let
\[
\mathbf{Y}_1 = \Z^{-1}\left(z\mapsto \lambda_1(z)\right)\quad \text{and}\quad \mathbf{Y}_2 = \Z^{-1}\left(z\mapsto\lambda_1^2(z)\right),
\]
then
\begin{equation}
\label{eq13b}
\partial_x^2u^{m+1}(a) + \left(\mathbf{Y}_1*_d \partial_x\mathbf{u}(a)\right)^{m+1} + \left(\mathbf{Y}_2 *_d \mathbf{u}(a)\right)^{m+1} + g_a u^{m+1}(a) = 0,
\end{equation}
where we used the convolution property of the $\Z$-transform.
We remark that to compute $u^{m+1}(a)$ we need to know $\partial_x u^{m+1}(a)$ and $\partial_x^2 u^{m+1}(a)$. Similarly, for problem \eqref{eq9}, we obtain
\begin{align}
\label{eq13c}
\partial_x u^{m+1}(b) - \left(\mathbf{Y}_3*_d \mathbf{u}(b)\right)^{m+1} &= 0,\\
\label{eq13d}
\partial_x^2 u^{m+1}(b)  - \left(\mathbf{Y}_4*_d \mathbf{u}(b)\right)^{m+1} &= 0,
\end{align}
where
\[
\mathbf{Y}_3 = \Z^{-1}\left(z\mapsto\sigma_1(z)\right)\quad \text{and}\quad \mathbf{Y}_4 = \Z^{-1}\left(z\mapsto\sigma_1^2(z)\right)
\]
with $\sigma_1$ root of
\[
\sigma^3 + g_b \sigma + \frac{2}{\tau}\frac{1-z^{-1}}{1+z^{-1}},\quad \text{Re}\,\sigma_1 < 0.
\]
The time discrete numerical scheme to problem \eqref{eq7} becomes (for $0\leq m\leq M-1$)
\begin{subequations}
\begin{align}
 u^* &= \left(I-\frac{\tau}{2}p_g(x)\partial_x-\frac{\tau}{2}\partial_x^3\right)u^m, \label{eq14a} \\
 \left(I+\frac{\tau}{2}g^*\partial_x\right)u^{m+1/2} &= \left(I-\frac{\tau}{2}g^*\partial_x
\right)u^*,\label{eq14b}\\
 \left(I+\frac{\tau}{2}p_g(x)\partial_x + \frac{\tau}{2}\partial_x^3\right)u^{m+1} &= u^{m+1/2},\label{eq14c}\\
 u(0,x) &= u^0(x),\label{eq14d}\\
 \partial_x^2 u^{m+1}(a)  + Y_1^0\partial_x u(a)^{m+1} + (g_a + Y_2^0) u(a)^{m+1} &= h_1^{m+1},\label{eq14e}\\
 \partial_x u^{m+1}(b) - Y_3^0u(b)^{m+1} &= h_2^{m+1},\label{eq14f}\\
 \partial_x^2 u^{m+1}(b)  - Y_4^0 u(b)^{m+1} &= h_3^{m+1}\label{eq14g},
\end{align}
\end{subequations}
where
\[
\begin{split}
h_1^{m+1} &=\sum_{k=1}^{m+1} Y_1^k \partial_x u^{m+1-k}(a) + Y_2^k u^{m+1-k}(a),  \\
h_2^{m+1} &= \sum_{k=1}^{m+1} Y_3^k u^{m+1-k}(b),\\
 h_3^{m+1} &= \sum_{k=1}^{m+1} Y_4^k u^{m+1-k}(b).
\end{split}
\]
Equations \eqref{eq14a}--\eqref{eq14c} are the Peaceman--Rachford scheme. Equation \eqref{eq14d} is the initial data and Equations \eqref{eq14e}--\eqref{eq14g} are the discrete transparent boundary conditions.

\begin{remark}[Computation of $\mathbf{Y}_j$]
The quantities $\mathbf{Y}_j$, $j=1,\dots, 4$ are given by the inverse $\Z$-transform through Cauchy's integral formula
\[
\begin{split}
\mathbf{Y}_j^m &= \frac{1}{2\pi\mathrm{i}}\oint_{S_r}\lambda_1^j(z)z^{m-1}\mathrm{d}\,z,\quad j=1,2,\\
\mathbf{Y}_{j+2}^m &= \frac{1}{2\pi\mathrm{i}}\oint_{S_r}\sigma_1^{j}(z)z^{m-1}\mathrm{d}\,z,\quad j=1,2,
\end{split}
\]
where $S_r$ is a circle with center $0$ and radius $r>\rho$, where $\rho$ is the radius of convergence in~\eqref{eq100}. An exact evaluation of the contour integrals might be too complicated or infeasible. Therefore, we employ a numerical procedure in order to approximate these quantities. In this work we use the algorithm described in~\cite[Sec. 2.3]{fang18}, which results in stable and accurate results.
\end{remark}

\section{Stability of the semi-discrete scheme}
For this section it is convenient to adopt a more compact notation. Thus, we write $D_3 = p_g\partial_x + \partial_x^3$ and $D = g^*\partial_x$. Then, the Peaceman--Rachford scheme \eqref{eq14a}--\eqref{eq14c} becomes
\[
\begin{split}
u^* &= \left(I-\frac{\tau}{2}D_3\right)u^m,\\
\left(I+\frac{\tau}{2}D\right)u^{m+1/2} &= \left(I-\frac{\tau}{2}D\right)u^*,\\
\left(I+\frac{\tau}{2}D_3\right)u^{m+1} &= u^{m+1/2}
\end{split}
\]
for $m=0,\dots ,M-1$. The scheme can be rewritten separating the first step, i.e. when $m=0$, as follows:
\[
\begin{split}
y^0 &= \left(I-\frac{\tau}{2}D_3\right)u^0,\\
\left(I+\frac{\tau}{2}D_3\right) \left(I-\frac{\tau}{2}D_3\right)^{-1} y^{m+1} &=  \left(I+\frac{\tau}{2}D\right)^{-1}\left(I-\frac{\tau}{2}D\right)y^m,\quad  0\leq m\leq M-2,\\
\left(I+\frac{\tau}{2}D\right)u^{M-1/2} &= \left(I-\frac{\tau}{2}D\right)y^{M-1},\\
\left(I+\frac{\tau}{2}D_3\right)u^{M} &= u^{M-1/2}.
\end{split}
\]
Using the commutativity between $I+\frac{\tau}{2}D_3$ and $I-\frac{\tau}{2}D_3$ leads to
\begin{equation}
\label{eq50}
\left(I-\frac{\tau}{2}D_3\right)^{-1} \left(I+\frac{\tau}{2}D_3\right) y^{m+1} = \left(I+\frac{\tau}{2}D\right)^{-1}\left(I-\frac{\tau}{2}D\right)y^m.
\end{equation}
We now show that the semi-discrete numerical scheme~\eqref{eq50} is stable. The proof follows a similar approach as in~\cite{fang18}.
\begin{theorem}[Stability]
The semi-discrete numerical scheme~\eqref{eq50} is stable if $\partial_x g^*\in L^{\infty}(a,b)$ and $\tau < 4/\lVert\partial_x g^*\rVert_{\infty} $.
\end{theorem}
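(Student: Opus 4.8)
The plan is to run a discrete $L^2(a,b)$ energy estimate on the propagator of~\eqref{eq50}. Writing one step as the composition $y^{m+1} = \mathcal{D}\,\mathcal{A}\,y^m$ with the two Cayley (Crank--Nicolson) operators $\mathcal{A} = (I+\frac{\tau}{2}D)^{-1}(I-\frac{\tau}{2}D)$ and $\mathcal{D} = (I+\frac{\tau}{2}D_3)^{-1}(I-\frac{\tau}{2}D_3)$, stability reduces to bounding $\lVert\mathcal{A}\rVert$ and $\lVert\mathcal{D}\rVert$ in the operator norm induced by $\langle\cdot,\cdot\rangle_{L^2(a,b)}$, together with the finitely many time-boundary steps (the initial $y^0 = (I-\frac{\tau}{2}D_3)u^0$ and the two final half-steps), which are bounded maps contributing only fixed constants. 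A discrete Gronwall argument then turns $\lVert\mathcal{D}\rVert\,\lVert\mathcal{A}\rVert \le 1 + C\tau$ into $\lVert y^m\rVert \le e^{CT}\lVert y^0\rVert$, which is the desired stability.

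The advection factor $\mathcal{A}$ is where the step-size restriction originates. Integrating by parts and using the crucial property $g^*(a)=g^*(b)=0$ gives $D + D^* = -\partial_x g^*$ as a multiplication operator, so that $\mathrm{Re}\langle D\phi,\phi\rangle = -\frac{1}{2}\int_a^b (\partial_x g^*)|\phi|^2\,\mathrm{d}x$ and hence $|\mathrm{Re}\langle D\phi,\phi\rangle| \le \frac{1}{2}\lVert\partial_x g^*\rVert_\infty\lVert\phi\rVert^2$. By Cauchy--Schwarz applied to the real part of the numerical range, $\lVert(I+\frac{\tau}{2}D)\phi\rVert \ge \mathrm{Re}\langle(I+\frac{\tau}{2}D)\phi,\phi\rangle/\lVert\phi\rVert \ge (1 - \frac{\tau}{4}\lVert\partial_x g^*\rVert_\infty)\lVert\phi\rVert$, so that under $\tau < 4/\lVert\partial_x g^*\rVert_\infty$ the operator $I+\frac{\tau}{2}D$ is boundedly invertible with $\lVert(I+\frac{\tau}{2}D)^{-1}\rVert \le (1-\frac{\tau}{4}\lVert\partial_x g^*\rVert_\infty)^{-1}$. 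Combining this with the elementary identity $\lVert(I-\frac{\tau}{2}D)\phi\rVert^2 - \lVert(I+\frac{\tau}{2}D)\phi\rVert^2 = -2\tau\,\mathrm{Re}\langle D\phi,\phi\rangle \le \tau\lVert\partial_x g^*\rVert_\infty\lVert\phi\rVert^2$ yields $\lVert\mathcal{A}\rVert^2 \le 1 + C\tau$ on the admissible range. This is precisely the step that forces $\tau < 4/\lVert\partial_x g^*\rVert_\infty$.

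For the dispersion factor $\mathcal{D}$ the target is the unconditional bound $\lVert\mathcal{D}\rVert \le 1$, so that it imposes no further constraint on $\tau$. I would test the dispersion half-step against $u^{m}+u^{m+1}$ (equivalently, compute $\lVert(I+\frac{\tau}{2}D_3)y\rVert^2 - \lVert(I-\frac{\tau}{2}D_3)y\rVert^2$) and integrate by parts. The terms $\partial_x^3$ and $p_g\partial_x$ produce boundary fluxes at $x=a,b$ involving $u$, $\partial_x u$ and $\partial_x^2 u$; the discrete transparent boundary conditions~\eqref{eq14e}--\eqref{eq14g} are designed exactly so that, after taking the $\Z$-transform in time, these boundary contributions assemble into a discrete convolution form of positive type, while the symmetric bulk remainder coming from the affine $p_g$ is a constant multiple of $\lVert\cdot\rVert^2$ that is absorbed into $C$. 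Establishing the positive-definiteness of the boundary memory operators built from $\mathbf{Y}_1,\dots,\mathbf{Y}_4$ is the main obstacle: it rests on showing that the symbols $\lambda_1(z),\lambda_1^2(z),\sigma_1(z),\sigma_1^2(z)$ induce, through the inverse $\Z$-transform, Toeplitz quadratic forms with the correct sign, which I would verify following the positive-real function arguments of~\cite{fang18}. Once both factors are controlled, the discrete Gronwall inequality closes the proof.
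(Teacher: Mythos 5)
Your treatment of the advection factor is correct and is essentially the paper's own argument in operator form: the identity $\mathrm{Re}\,(D\phi,\phi)=-\tfrac12\int_a^b(\partial_x g^*)\,\phi^2\,\mathrm{d}x$ (valid precisely because $g^*(a)=g^*(b)=0$), the numerical-range lower bound for $I+\tfrac{\tau}{2}D$, and the conclusion $\lVert\mathcal{A}\rVert^2\le\bigl(1+\tfrac{\tau}{4}\lVert\partial_x g^*\rVert_{\infty}\bigr)\big/\bigl(1-\tfrac{\tau}{4}\lVert\partial_x g^*\rVert_{\infty}\bigr)$ under $\tau<4/\lVert\partial_x g^*\rVert_{\infty}$ is exactly the paper's estimate~\eqref{eq53b}, where this quotient is the source of the step-size restriction.

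The genuine gap is in the dispersion factor. The target $\lVert\mathcal{D}\rVert\le 1$ as a \emph{one-step} operator-norm bound is not attainable, and is not what the scheme satisfies, because the discrete transparent boundary conditions~\eqref{eq14e}--\eqref{eq14g} are non-local in time: the right-hand sides $h_i^{m+1}$ are convolutions over the entire boundary history of $u$, so the dispersion step is not a fixed linear map on $L^2(a,b)$ applied identically at each step, and the boundary flux produced by integration by parts at a single step, $B^m=-\tau\bigl[2\partial_x^2u^{m+1}\cdot u^{m+1}-(\partial_x u^{m+1})^2+g\cdot(u^{m+1})^2\bigr]_{x=a}^{x=b}$, has no definite sign in general. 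The positivity mechanism you invoke (the symbols $\lambda_1(z),\sigma_1(z)$ and Parseval) is intrinsically global in time: Parseval turns the boundary convolutions into multiplication by $\lambda_1(\mathrm{e}^{\mathrm{i}\theta})$ etc.\ only after extending the boundary sequences by zero and summing over \emph{all} steps, so what it yields is $\sum_{m=0}^{M-1}B^m\le 0$ (the paper's Lemma~\ref{lemma1}, which rests on Prop.~2.4 of Besse, Noble and Sanchez), not a per-step contraction. Consequently your framework must be restructured rather than completed: keep the per-step energy identity with the flux $B^m$ explicit, observing that the Peaceman--Rachford structure gives $y^{m+1}+w=2u^{m+1}$, so the flux involves exactly the quantity constrained by the transparent boundary conditions; then sum the resulting recursion over $m$, discard $\sum_m B^m\le 0$ by the lemma, and close with Gronwall. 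With that repair your two estimates assemble into the paper's proof; as written, the step ``$\lVert\mathcal{D}\rVert\le 1$, then multiply per-step bounds and apply Gronwall'' would fail.
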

\begin{proof}
Let $(\cdot,\cdot)$ be the usual inner product on $L^2(a,b)$ and $\lVert\cdot\rVert$ the induced norm.
We define $w := \left(I+\frac{\tau}{2}D\right)^{-1}\left(I-\frac{\tau}{2}D\right)y^m$. Then
\[
\left(I+\frac{\tau}{2}D_3\right) y^{m+1}  = \left(I-\frac{\tau}{2}D_3\right)w.
\]
Applying the inner product with $y^{m+1} + w$ gives
\[
(y^{m+1},y^{m+1} + w) + \frac{\tau}{2}(D_3\, y^{m+1},y^{m+1}+w) = (w,y^{m+1}+w)-\frac{\tau}{2}(D_3\,w,y^{m+1}+w)
\]
or equivalently
\[
\lVert y^{m+1}\rVert^2 - \lVert w\rVert^2 = -\frac{\tau}{2} \left(D_3\,(y^{m+1}+w),y^{m+1}+w\right).
\]
Integrating the right-hand side by parts gives
\begin{multline}
\label{eq54}
\lVert y^{m+1}\rVert^2 - \lVert w\rVert^2 \\= -\frac{\tau}{2}\left[\partial_x^2(y^{m+1}+w)\cdot (y^{m+1} + w) -\frac{1}{2}\left(\partial_x(y^{m+1} + w\right))^2 + \frac{1}{2}p_g(y^{m+1}+w)^2\right]_{x=a}^{x=b}\\ + \frac{\tau}{4}\left(\partial_x p_g\right)\cdot\lVert y^{m+1}  + w\rVert^2.
\end{multline}
Notice that $\partial_x p_g$ is constant since $p_g$ is a polynomial of degree 1. In order to complete the proof, a bound for $\lVert w\rVert^2$ is needed. By definition of $w$, we have
\begin{equation}
\label{eq51}
\left(I+\frac{\tau}{2}D\right) w = \left(I-\frac{\tau}{2}D\right)y^m.
\end{equation}
Taking the inner product with $w+y^m$ gives
\[
\lVert w\rVert^2-\lVert y^m\rVert^2 = -\frac{\tau}{2}(D(w+y^m),w+y^m).
\]
Integrating by parts and using the fact that $g^*(a) = g^*(b) = 0$ gives
\begin{equation*}
\label{eq53}
\begin{split}
\lVert w\rVert^2-\lVert y^m\rVert^2 &= \frac{\tau}{4}\left((w+y^m)^2,\partial_x g^*\right)\\
&\leq \frac{\tau}{4}\lVert \partial_x g^*\rVert_{\infty}\left(\lVert w\rVert^2 +\lVert y^m\rVert^2\right).
\end{split}
\end{equation*}
    Using the hypothesis $\tau<4/\lVert \partial_x g^*\rVert_{\infty}$ leads to
\begin{equation}
\label{eq53b}
\lVert w\rVert^2 \leq \frac{1 + \frac{\tau}{4}\lVert \partial_x g^*\rVert_{\infty}}{1 - \frac{\tau}{4}\lVert \partial_x g^*\rVert_{\infty}}\lVert y^m\rVert^2.
\end{equation}
Combining~\eqref{eq54} with~\eqref{eq53b} gives the bound
\begin{equation}
\label{eq55}
\left( 1 -\frac{\tau}{4}\lvert\partial_x p_g\rvert\right)\lVert y^{m+1}\rVert^2 \leq B^m + \left( 1 +\frac{\tau}{4}\lvert\partial_x p_g\rvert\right)\frac{1 + \frac{\tau}{4}\lVert \partial_x g^*\rVert_{\infty}}{1- \frac{\tau}{4}\lVert \partial_x g^*\rVert_{\infty}}\lVert y^m\rVert^2,
\end{equation}
where
\[
B^m = -\tau\left[2\partial_x^2(u^{m+1})\cdot u^{m+1} -\left(\partial_x u^{m+1}\right)^2 + g\cdot(u^{m+1})^2\right]_{x=a}^{x=b}.
\]
In the definition of $B^m$ we used  $p_g(a) = g(a)$, $p_g(b) = g(b)$ and
\[
y^{m+1} + w = \left(I-\frac{\tau}{2}D_3\right)u^{m+1} + \left(I+\frac{\tau}{2}D_3\right)u^{m+1} = 2u^{m+1}.
\]
Multiplying both sides of~\eqref{eq55} by $1- \frac{\tau}{4}\lVert \partial_x g^*\rVert_{\infty}$ and taking the sum over $m$ gives
\[
\lVert y^{M}\rVert^2 - \lVert y^0\rVert^2\leq c_1\sum_{m=0}^{M-1} B^m + c_2\sum_{m=0}^{M-1} \left(\lVert y^m\rVert^2 + \lVert y^{m+1}\rVert^2\right)
\]
with
\[
c_1 = \frac{\left(1- \frac{\tau}{4}\lVert \partial_x g^*\rVert_{\infty}\right)}{1 + \frac{\tau^2}{16}\lvert\partial_x p_g\rvert\cdot\lVert\partial_x g^*\rVert_{\infty}}\geq 0,\quad
c_2 = \frac{\frac{\tau}{4}\left(\lvert \partial_x p_g\rvert + \lVert\partial_x g^*\rVert_{\infty}\right)}{ 1 + \frac{\tau^2}{16}\lvert \partial_x p_g\rvert\cdot\lVert\partial_x g^*\rVert_{\infty}}\geq 0.
\]
By Lemma~\ref{lemma1} the quantity $\sum B^m$ is negative. Therefore,
\[
\lVert y^{M}\rVert^2 - \lVert y^0\rVert^2\leq c_2\sum_{m=0}^{M-1} \left(\lVert y^m\rVert^2 + \lVert y^{m+1}\rVert^2\right)
\]
and stability follows by Gronwall's inequality since $c_2 =\mathcal{O}(\tau)$.
\end{proof}
\begin{lemma}
\label{lemma1}
It holds $\sum_{m=0}^{M-1} B^m \leq 0$.
\end{lemma}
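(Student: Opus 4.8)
The plan is to reduce the statement to two one-sided boundary estimates and then fix the sign of each by passing to the $\Z$-transform. Writing $\mathcal{Q}_c[v] = 2\,\partial_x^2 v(c)\,v(c) - (\partial_x v(c))^2 + g_c\,v(c)^2$ for $c\in\{a,b\}$, the definition of $B^m$ gives $B^m = -\tau\big(\mathcal{Q}_b[u^{m+1}] - \mathcal{Q}_a[u^{m+1}]\big)$, whence $\sum_{m=0}^{M-1} B^m = -\tau\sum_{m=1}^{M}\big(\mathcal{Q}_b[u^m]-\mathcal{Q}_a[u^m]\big)$. Since $u^0$ has compact support in $(a,b)$, all boundary quantities vanish at $m=0$, so the sum may be started at $m=0$. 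It therefore suffices to prove the two one-sided bounds $\sum_{m=0}^{M}\mathcal{Q}_a[u^m]\le 0$ and $\sum_{m=0}^{M}\mathcal{Q}_b[u^m]\ge 0$; subtracting them gives the claim.

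First I would fix the sign of the $\Z$-transformed boundary form. From \eqref{eq11}, the transformed exterior solution satisfies $\partial_x^3\hat{u} + g_a\partial_x\hat{u} = s(z)\hat{u}$ on $(-\infty,a)$ with $s(z) = -\tfrac{2}{\tau}\tfrac{z-1}{z+1}$, and decays at $-\infty$. Multiplying by $\overline{\hat u}$, integrating over $(-\infty,a)$, integrating by parts (the contributions at $-\infty$ vanish by decay, and no interior term survives since $g_a$ is constant), and taking real parts yields
\[
2\,\mathrm{Re}\big(\partial_x^2\hat u(a)\,\overline{\hat u(a)}\big) - |\partial_x\hat u(a)|^2 + g_a|\hat u(a)|^2 = 2\,\mathrm{Re}\,s(z)\int_{-\infty}^{a}|\hat u(x,z)|^2\,\mathrm{d}x .
\]
The left-hand side is exactly the symbol of $\mathcal{Q}_a$. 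Since $\mathrm{Re}\tfrac{z-1}{z+1} = \tfrac{|z|^2-1}{|z+1|^2}>0$ for $|z|>1$, we have $\mathrm{Re}\,s(z)<0$ there, so this symbol is $\le 0$ on every circle $|z|=r>1$. The analogous computation on $(b,\infty)$, where the orientation of the boundary term is reversed, shows that the symbol of $\mathcal{Q}_b$ is $\ge 0$ on $|z|=r>1$.

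It remains to transfer these pointwise signs to the time sums. Here I would use Parseval's identity for the $\Z$-transform on the circle $|z|=r$, namely $\sum_{m\ge0} f^m g^m r^{-2m} = \tfrac{1}{2\pi}\int_0^{2\pi}\hat f(r\mathrm{e}^{\mathrm{i}\theta})\overline{\hat g(r\mathrm{e}^{\mathrm{i}\theta})}\,\mathrm{d}\theta$ for real sequences. Substituting the discrete transparent boundary conditions \eqref{eq14e}--\eqref{eq14g} into $\mathcal{Q}_a[u^m]$ splits it into the manifestly nonpositive local terms $-g_a(u^m(a))^2-(\partial_x u^m(a))^2$ and convolution terms carrying the causal kernels $\mathbf{Y}_1,\mathbf{Y}_2$; applying Parseval on $|z|=r>1$ turns the weighted sum into the integral of the symbol above, which is $\le 0$. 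Letting $r\to1^+$ gives $\sum_{m=0}^{M}\mathcal{Q}_a[u^m]\le 0$, and symmetrically for $b$.

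The main obstacle is precisely this last transfer, because the symbol identity holds for the full (infinite) boundary sequences whereas the lemma concerns a finite-horizon sum. The clean way around it is to observe that, for each fixed $z$ with $|z|>1$, the identity above realises the symbol of $-\mathcal{Q}_a$ as a genuinely nonnegative Hermitian form in the pair $\big(\hat u(a),\partial_x\hat u(a)\big)$ (any values of which are attained by a decaying exterior solution, there being two modes with $\mathrm{Re}\,\lambda_{2,3}>0$). Convolution against such a positive-type kernel is accretive on finite sequences: zero-padding the finite boundary data and invoking Parseval on $|z|=r>1$ shows the truncated sum inherits the sign regardless of the tail, so no assumption on the long-time behaviour of the scheme is needed. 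The remaining care concerns the limit $r\to1^+$ and the possible coalescence of the characteristic roots $\lambda_i(z)$ on $|z|=1$, which must be checked not to spoil the convergence of the contour integrals.
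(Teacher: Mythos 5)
Your proposal is correct, and its overall skeleton coincides with the paper's proof: both split $\sum_m B^m$ into the two one-sided boundary sums at $x=a$ and $x=b$, insert the discrete transparent boundary conditions, extend the finite boundary sequences by zero, and pass to the $\Z$-transform side via Parseval. Where you genuinely diverge is in how the sign of the transformed boundary forms is established. The paper stops once $B^M_a$ and $B^M_b$ are written as integrals of Hermitian symbols in $\lambda_1(z)$ and $\sigma_1(z)$, and then outsources precisely the key point, citing \cite[Sect.~2.2]{fang18} and \cite[Prop.~2.4]{besse16a} for the pointwise signs. You instead prove those signs directly: multiplying the transformed exterior ODE by $\overline{\hat u}$, integrating over $(-\infty,a)$ resp.\ $(b,\infty)$, and using $\mathrm{Re}\,\tfrac{z-1}{z+1}>0$ for $|z|>1$ exhibits the symbol of $\mathcal{Q}_a$ (after eliminating $\partial_x^2\hat u(a)$ via \eqref{eq13a}) as a nonpositive Hermitian form on all of $\C^2$, thanks to the surjectivity of the trace map of decaying exterior solutions, and symmetrically the symbol at $b$ as nonnegative; this makes the lemma self-contained where the paper's proof is not. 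A second difference is technical but real: the paper applies Parseval directly on the unit circle, where $\lambda_1$ and $\sigma_1$ are singular at $z=-1$ and convergence is delicate, whereas your variant at radius $r>1$ followed by $r\to 1^+$ on the (finite) truncated sums sidesteps this entirely — which also makes your closing worry about root coalescence on $|z|=1$ unnecessary. One point you should spell out if you write this up: at $x=b$ the quadratic form contains $-\bigl((\mathbf{Y}_3 *_d \mathbf{u}(b))^{m}\bigr)^2$, a square of a convolution rather than a convolution times a local trace, so zero-padding does not localize the sum exactly; the discarded tail consists of nonpositive terms, so dropping it only strengthens the inequality $B^M_b\geq 0$ — this is the substance of your ``regardless of the tail'' remark and deserves an explicit line.
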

\begin{proof}
Consider
\[
\sum_{m =0}^{M-1} B^m = \tau (B^M_a - B^M_b),
\]
where
\[
B^M_a := \sum_{m=0}^{M-1} 2\partial_x^2u^{m+1}(a)\cdot u^{m+1}(a) -\left(\partial_xu^{m+1}(a)\right)^2 + g(a)\,(u^{m+1}(a))^2
\]
and
\[
B^M_b :=\sum_{m=0}^{M-1}  2\partial_x^2u^{m+1}(b)\cdot u^{m+1}(b) -\left(\partial_x u^{m+1}(b)\right)^2 + g(b)\,(u^{m+1}(b))^2.
\]
Inserting the discrete transparent boundary conditions~\eqref{eq13b}--\eqref{eq13d} in $B^M_a,$ $B^M_b$ gives
\[
\begin{split}
B^M_a & = -\sum_{m=0}^{M-1} \left(2\left((\mathbf{Y_1}*_d\partial_x\mathbf{u}(a))^{m+1} +(\mathbf{Y_2}*_d\mathbf{u}(a))^{m+1}  + \frac{g(a)}{2} u^{m+1}(a)\right) u^{m+1}(a) +\left(\partial_x u^{m+1}(a)\right)^2\right) ,\\
B^M_b &= \sum_{m=0}^{M-1} \left(2(\mathbf{Y_4}*_d\mathbf{u}(b))^{m+1} u^{m+1}(b) -\left((\mathbf{Y_3}*_d\mathbf{u}(b))^{m+1}\right)^2 + g(b)\,(u^{m+1}(b))^2\right).
\end{split}
\]
Let us extend  the sequences $B^M_a,$ $B^M_b$ to infinity sequences by zero and apply Parseval's identity
\[
\sum_{m=-\infty}^{\infty} v_1^m\cdot \bar{v}_2^m = \frac{1}{2\pi}\int_0^{2\pi} \Z(v_1) (z)\cdot \overline{\Z(v_2)}(z)\Big|_{z = \mathrm{e}^{\mathrm{i}\theta}}\mathrm{d}\theta.
\]
We obtain
\begin{equation}
\label{eq60}
\begin{split}
B^M_a &= \frac{1}{2\pi} \int_0^{2\pi} |z|^2\left\{ -\left(2\lambda_1^2(z) + g(a)\right)|\hat{u}(a,z)|^2-\lvert\partial_x\hat{u}(a,z)\rvert^2 - 2\lambda_1(z)\partial_x\hat{u}(a,z) \overline{\hat{u}(a,z)}\right\}\Big|_{z=\mathrm{e}^{\mathrm{i}\theta}}\mathrm{d}\theta,\\
B^M_b &= \frac{1}{2\pi} \int_0^{2\pi} |z|^2 \left\{2\sigma_1^2(z) - |\sigma_1(z)|^2 + g(b) \right\}|\hat{u}(b,z)|^2\Big|_{z=\mathrm{e}^{\mathrm{i}\theta}}\mathrm{d}\theta.
\end{split}
\end{equation}
Notice that $B^M_a$ and $B^M_b$ are real values, therefore the imaginary parts of the right-hand sides in \eqref{eq60} must integrate to $0$. Therefore,
\begin{equation}
\label{eq61}
\begin{split}
B^M_a &= \!\begin{multlined}[t]
\frac{1}{2\pi} \int_0^{2\pi} |z|^2\left\{ -\left(2\cdot\mathrm{Re}\,\lambda_1^2(z) - g(a)\right)|\hat{u}(a,z)|^2-\lvert\partial_x\hat{u}(a,z)\rvert^2 \right\}\Big|_{z=\mathrm{e}^{\mathrm{i}\theta}}\mathrm{d}\theta \\
+\frac{1}{2\pi} \int_0^{2\pi} |z|^2\left\{ -2\cdot\mathrm{Re}\,\left(\lambda_1(z)\partial_x\hat{u}(a,z) \overline{\hat{u}(a,z)}\right)\right\}\Big|_{z=\mathrm{e}^{\mathrm{i}\theta}}\mathrm{d}\theta,
\end{multlined} \\
B^M_b &= \frac{1}{2\pi} \int_0^{2\pi} |z|^2 \left(2\cdot\mathrm{Re}\,\sigma_1^2(z) - |\sigma_1(z)|^2 + g(b) \right)|\hat{u}(b,z)|^2\Big|_{z=\mathrm{e}^{\mathrm{i}\theta}}\mathrm{d}\theta.
\end{split}
\end{equation}
The quantities $B^M_a$ and $B^M_b$ are now in the same form as \cite[Sect. 2.2]{fang18}, therefore the result follows by~\cite[Prop. 2.4]{besse16a}.
\end{proof}

\section{Spatial discretization: pseudo-spectral approach}
\label{spacedisc}
The spatial discretization of problem~\eqref{eq14a}--\eqref{eq14g} is carried out by a dual Petrov--Galerkin method. In particular, we follow the approach given in~\cite{shen04} for the dispersive part and the approach given in~\cite{shen07} for the variable coefficient advection. It is very well known that pseudo-spectral methods achieve high accuracy even for a modest number of collocation points $N$, provided the solution is smooth. However, these methods have to be carefully designed in order to obtain sparse mass and stiffness matrices in frequency space. Then, the associated linear system can be solved in $\mathcal{O}(N)$ operations.

In the following description we assume, without loss of generality, $a=-1$ and $b=1$. The idea is to choose the dual basis functions of the dual Petrov--Galerkin formulation in such a manner that boundary terms from integration by parts vanish. Let us introduce a variational formulation for
\begin{align}
u^* &= \left(I-\frac{\tau}{2}p_g\partial_x-\frac{\tau}{2}\partial_x^3\right)u^m, \label{eq15} \\
\left(I+\frac{\tau}{2}g^*\partial_x\right)u^{m+1/2} & = \left(I-\frac{\tau}{2}g^*\partial_x\right)u^*,\label{eq16}\\
\left(I+\frac{\tau}{2}p_g\partial_x + \frac{\tau}{2}\partial_x^3\right)u^{m+1} &= u^{m+1/2}\label{eq17}
\end{align}
so that the discrete transparent boundary conditions are satisfied. To this goal, let $\mathcal{P}_N$ be the space of polynomials up to degree $N$. For \eqref{eq15} and \eqref{eq17} we introduce the \emph{dispersive} space
\[
\begin{split}
V^d_N = \{\phi^d\in\mathcal{P}_N| \partial_x^2 \phi^d(a)+Y_1^0\partial_x \phi^d(a)+\left(g_a+Y_2^0\right)\phi^d(a) &= 0,\\
\partial_x \phi^d(b)-Y_3^0\phi^d(b) &= 0,\\
\partial_x^2 \phi^d(b)-Y_4^0\phi^d(b) &= 0\}.
\end{split}
\]
The conditions in $V^d_N$ collect the left-hand side of \eqref{eq14e}--\eqref{eq14g}. Let $(u,v) = \int_{a}^b u(x)v(x)\,\mathrm{d}x$ be the usual $L_2$ inner product. The dual space $V_N^{d,*}$ is defined in the usual way, i.e. for every $\phi^d\in V_N^d$ and $\psi^d\in V_N^{d,*}$ it holds \[
(p_g\partial_x \phi^d  + \partial_x^3 \phi^d,\psi^d) = - (\phi^d, \partial_x (p_g\psi^d) + \partial_x^3 \psi^d).
\]
\begin{lemma}
The dual space $V_N^{d,*}$ of $V_N^d$ is given by
\[
\begin{split}
V^{d,*}_N = \{\psi^d\in\mathcal{P}_N|  \partial_x^2 \psi^d(b)-Y_3^0\partial_x \psi^d(b)+\left(g_b+Y_4^0\right)\psi^d(b) &= 0,\\
\partial_x \psi^d(a)+Y_1^0\psi^d(a) &= 0,\\
\partial_x^2 \psi^d(a) - Y_2^0\psi^d(a) &= 0\}.
\end{split}
\]
\end{lemma}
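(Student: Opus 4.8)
The plan is to turn the three defining conditions of $V_N^{d,*}$ into the precise requirement that the boundary terms generated by integration by parts in the definition of the dual space cancel. For brevity write $\phi=\phi^d$ and $\psi=\psi^d$. I would start from $(p_g\partial_x\phi+\partial_x^3\phi,\psi)$ and move every derivative off $\phi$ onto $\psi$.

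One integration by parts of the advection term gives \[(p_g\partial_x\phi,\psi)=[\,p_g\phi\psi\,]_a^b-(\phi,\partial_x(p_g\psi)),\] while three integrations by parts of the dispersive term give \[(\partial_x^3\phi,\psi)=\big[\partial_x^2\phi\,\psi-\partial_x\phi\,\partial_x\psi+\phi\,\partial_x^2\psi\big]_a^b-(\phi,\partial_x^3\psi).\] Hence the identity defining $V_N^{d,*}$ holds if and only if the collected boundary functional \[\mathrm{BT}:=\Big[\,p_g\phi\psi+\partial_x^2\phi\,\psi-\partial_x\phi\,\partial_x\psi+\phi\,\partial_x^2\psi\,\Big]_{x=a}^{x=b}\] vanishes for every $\phi\in V_N^d$.

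Next I would insert the constraints cutting out $V_N^d$. Using $p_g(a)=g_a$, $p_g(b)=g_b$ and the relations $\partial_x^2\phi(a)=-Y_1^0\partial_x\phi(a)-(g_a+Y_2^0)\phi(a)$, $\partial_x\phi(b)=Y_3^0\phi(b)$ and $\partial_x^2\phi(b)=Y_4^0\phi(b)$, I eliminate $\partial_x^2\phi(a)$, $\partial_x\phi(b)$ and $\partial_x^2\phi(b)$ from $\mathrm{BT}$. Collecting the surviving boundary data of $\phi$ then yields the factorisation \[\mathrm{BT}=\phi(b)\Big[\partial_x^2\psi(b)-Y_3^0\partial_x\psi(b)+(g_b+Y_4^0)\psi(b)\Big]-\phi(a)\Big[\partial_x^2\psi(a)-Y_2^0\psi(a)\Big]+\partial_x\phi(a)\Big[\partial_x\psi(a)+Y_1^0\psi(a)\Big].\]

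Finally I would argue independence of the boundary data. For $N\ge 5$ the six functionals $\phi(a),\partial_x\phi(a),\partial_x^2\phi(a),\phi(b),\partial_x\phi(b),\partial_x^2\phi(b)$ are linearly independent on $\mathcal{P}_N$, and the three conditions defining $V_N^d$ merely express $\partial_x^2\phi(a)$, $\partial_x\phi(b)$, $\partial_x^2\phi(b)$ through the others; thus $\phi\mapsto(\phi(a),\partial_x\phi(a),\phi(b))$ maps $V_N^d$ onto $\R^3$. Since these three values are free, requiring $\mathrm{BT}=0$ for all $\phi\in V_N^d$ forces each of the three brackets to vanish, and these are exactly the conditions listed in the statement; conversely, if $\psi$ satisfies them then $\mathrm{BT}=0$ and the integration-by-parts identity holds, so the claimed set is precisely $V_N^{d,*}$ (a dimension count gives $\dim V_N^{d,*}=N-2=\dim V_N^d$ as a consistency check). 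I expect the only delicate points to be the careful sign bookkeeping through the triple integration by parts and the surjectivity argument that licenses reading off the three conditions on $\psi$ independently.
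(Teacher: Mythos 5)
Your proof is correct and follows essentially the same route as the paper: integrate by parts once on the advection term and three times on the dispersive term, substitute the $V_N^d$ boundary relations at $x=a$ and $x=b$ (with $p_g(a)=g_a$, $p_g(b)=g_b$), and read off the three conditions on $\psi^d$ from the factorised boundary terms. The only difference is that you make explicit the surjectivity of $\phi\mapsto\bigl(\phi(a),\partial_x\phi(a),\phi(b)\bigr)$ on $V_N^d$, which licenses concluding that each bracket vanishes individually --- a step the paper leaves implicit, so this is a welcome tightening rather than a divergence.
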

\begin{proof}
Integrating $(p_g\partial_x \phi^d,\psi^d)$ by parts and integrating $(\partial_x^3 \phi^d,\psi^d)$ by parts three times gives
\[
\begin{split}
(p_g\partial_x \phi^d  + \partial_x^3 \phi^d,\psi^d) & = \int_{a}^{b} \left(p_g\partial_x \phi^d(x)  + \partial_x^3 \phi^d(x) \right) \psi^d(x)\, \mathrm{d}x  \\
&= \!\begin{multlined}[t] p_g\cdot\phi^d \cdot \psi^d\rvert_{x=a}^b + \partial_x^2 \phi^d\cdot \psi^d\rvert_{x=a}^b - \partial_x \phi^d \cdot \partial_x \psi^d \rvert_{x=a}^b \\+ \phi^d \cdot\partial_x^2 \psi^d \rvert_{x=a}^b - (\phi^d, \partial_x (p_g\cdot\psi^d) + \partial_x^3 \psi^d). \end{multlined}
\end{split}
\]
We want the boundary terms to vanish. For $x=b$ we have
\[
\begin{split}
0 &= p_g(b) \phi^d(b) \psi^d(b) + \partial_x^2 \phi^d(b)\psi^d(b) -\partial_x\phi^d(b)\partial_x\psi^d(b) + \phi^d(b)\partial_x^2\psi^d(b) \\
&= \phi^d(b)\cdot \left(\partial_x^2 \psi^d(b) -Y_3^0\partial_x\psi^d(b) + (g_b+Y_4^0)\psi^d(b)\right).
\end{split}
\]
The last equality is obtained by substituting $p_g(b) = g_b$ and using the relations given by the space $V_N^d$ for $\partial_x\phi^d(b)$ and $\partial_x^2\phi^d(b)$. Similarly for $x=a$ we have
\[
\begin{split}
0 &= p_g(a) \phi^d(a) \psi^d(a) + \partial_x^2 \phi^d(a)\psi^d(a) -\partial_x\phi^d(a)\partial_x\psi^d(a) + \phi^d(a)\partial_x^2\psi^d(a)\\
&= \phi^d(a)\cdot \left(\partial_x^2\psi^d(a) - Y_2^0\psi^d(a)\right) + \partial_x\phi^d(a)\cdot\left(\partial_x\psi^d(a) + Y_1^0\psi^d(a)\right),
\end{split}
\]
which leads to the boundary relations of the dual space $V_N^*$.
\end{proof}
We proceed by introducing the \emph{advection} space for \eqref{eq16}:
\[
V^a_N = \{\phi^a\in\mathcal{P}_N\}.
\]
Notice that due to the variable coefficient $g^*$ the space $V^a_N$ is free from inflow or outflow conditions. The dual space $V_N^{a,*}$ is defined so that for every $\phi^a\in V_N^a$ it holds
\[
(g^*\partial_x \phi^a,\psi^a) = - \left(\phi^a,\partial_x (g^* \psi^a)\right)
\] for every $\psi^a\in V_N^{a,*}$.
The dual space $V^{a,*}_N$ is $\mathcal{P}_N$.
Let $L_j$ be the $j$th Legendre polynomial. We define
\begin{equation}
\label{eq18}
\begin{split}
\phi^d_j(x) &:= L_j(x) + \alpha_jL_{j+1}(x) + \beta_j L_{j+2}(x) + \gamma_j L_{j+3}(x),\quad 0\leq j\leq N-3,\\
\psi^d_j(x) &:= L_j(x) + \alpha_j^*L_{j+1}(x) + \beta_j^* L_{j+2}(x) + \gamma_j^* L_{j+3}(x),\quad 0\leq j\leq N-3,\\
\phi^a_j(x) &:= L_j(x),\quad 0\leq j\leq N, \\
\psi^a_j(x) &:= L_j(x),\quad 0\leq j\leq N, \\
\end{split}
\end{equation}
where the coefficients $\alpha_j,\beta_j,\gamma_j,\alpha_j^*,\beta_j^*,\gamma_j^*$ are chosen in such a way that $\phi_j^d$, $\psi_j^d$ belong to $V_N^d$, $V_N^{d,*}$, respectively, see appendix~\ref{app1}.
The sequences $\{\phi_j^d\}_{j=0}^{N-3}$ and $\{\psi_j^d\}_{j=0}^{N-3}$ are a basis of $V_N^d$ and $V_N^{d,*}$ respectively. We are now ready to consider the variational formulation.
\subsection{Variational formulation}
The dual Petrov--Galerkin formulation of~\eqref{eq15} reads: find $u^*\in\mathcal{P}_N$ such that
\begin{equation}
\label{eq19}
(u^*,\psi^d_j) = \left(\left(I-\frac{\tau}{2}p_g\partial_x-\frac{\tau}{2}\partial_x^3\right)u^m,\psi^d_j\right)
\end{equation}
holds for every $\psi^d_j\in V_N^{d,*}$, $j=0,\dots ,N-3$.
In general the function $u^m$ does not belong to the space $V_N^d$. Indeed $u^m$ satisfies the discrete transparent boundary conditions
\[
\begin{split}
 \partial_x^2 u^{m}(a)  + Y_1^0\partial_x u(a)^{m} +\left(g_a +  Y_2^0\right) u(a)^{m} &= h_1^m,\\
 \partial_x u^{m}(b) - Y_3^0u(b)^{m} &= h_2^m,\\
\partial_x^2 u^{m}(b)  - Y_4^0 u(b)^{m} &= h_3^m.
\end{split}
\]
However, we can write $u^m = u^m_h + p_2^m$, where $u^m_h\in V_N^d$ and $p_2^m$ is the unique polynomial of degree two such that
\begin{equation}
\label{eq29}
\begin{split}
 \partial_x^2 p_2^{m}(a)  + Y_1^0 \partial_xp_2^{m}(a) + \left(g_a + Y_2^0\right) p_2^{m}(a) &= h_1^m,\\
 \partial_x p_2^{m}(b) - Y_3^0 p_2^{m}(b) &= h_2^m,\\
 \partial_x^2 p_2^{m}(b)  - Y_4^0 p_2^{m}(b) &= h_3^m.
\end{split}
\end{equation}
The function $u^*$ also does not belong to the space. Similarly, we can write $u^* = u^*_h + p_2^*$. We assume that $u^*$ satisfies the same boundary conditions as $u^m$, therefore $p^*_2 = p^m_2$. We thus obtain 
\begin{equation}
\label{eq20}
(u^*_h,\psi^d_j) = \left(\left(I-\frac{\tau}{2}p_g\partial_x -\frac{\tau}{2}\partial_x^3\right)u^m_h,\psi^d_j\right) +\Big(\underbrace{p_2^m-p_2^*}_{=0} -\frac{\tau}{2}p_g\partial_x p_2^m,\psi_j^d\Big).
\end{equation}
We proceed with the dual Petrov--Galerkin formulation of~\eqref{eq16}. Find $u^{m+1/2}\in\mathcal{P}_N$ such that
\begin{equation}
\label{eq21}
\left(\left(I+\frac{\tau}{2}g^*\partial_x\right) u^{m+1/2},\psi^a_j\right) = \left(\left(I-\frac{\tau}{2}g^*\partial_x\right) u^*,\psi_j^a\right)
\end{equation}
holds for every $\psi^a_j\in V_N^{a,*}$, $j=0,\dots ,N$.  Notice that $u^*, u^{m+1/2}\in V_N^a$.
So, differently from the dispersive case, we obtain the solution $u^{m+1/2}$ without performing any shift. 

Similarly to~\eqref{eq15}, the dual Petrov--Galerkin formulation of~\eqref{eq17} reads: find $u^{m+1}\in\mathcal{P}_N$ such that
\begin{equation}
\label{eq25}
\left(\left(I+\frac{\tau}{2}p_g\partial_x+\frac{\tau}{2}\partial_x^3\right)u_h^{m+1},\psi^d_j\right) = \left(u^{m+1/2} - p_2^{m+1} -\frac{\tau}{2}p_g\partial_x p^{m+1}_2,\psi_j^d\right)
\end{equation}
holds for every $\psi^d_j\in V_N^{d,*}$, $j=0,\dots ,N-3$.

\subsection{Implementation in frequency space}
\label{freqspace}
This section is dedicated to compute the mass and stiffness matrices for~\eqref{eq20}--\eqref{eq25}. When it comes to numerical implementation, the $L^2$ inner product $(u,v)$ needs to be approximated. We use two different discrete inner products for the spaces $V^d_N$ and $V_N^a$. This choice is motivated by the fact that the spaces $V_N^d$ and $V_N^a$ satisfy different boundary conditions.
\begin{definition}[Dispersive inner product]
Let $\langle \cdot,\cdot\rangle_N^d$ be the dispersive inner product defined as
\begin{equation}
\label{eq27}
\langle u,v\rangle_N^d := \sum_{\ell = 2}^{N-1}w_{\ell}u(y_{\ell})v(y_{\ell}) + w_1u(-1)v(-1) + w_{N}u(1)v(1) + w'_{N}\partial_y\left(u(y)v(y)\right)\bigg|_{y=1},
\end{equation}
where $y_l$ are the roots of the Jacobi polynomial $P^{(2,1)}_{N-2}(y)$ and $w_l$ the associated weights.
\end{definition}

\begin{definition}[Advection inner product]
Let $\langle \cdot,\cdot\rangle_N^a$ be the advection inner product defined as
\begin{equation}
\label{eq35}
\langle u,v\rangle_N^a := \sum_{\ell = 2}^{N+2}w_{\ell}u(y_{\ell})v(y_{\ell})
\end{equation}
where $y_l$ are the roots of the Jacobi polynomial $P^{(0,0)}_{N+1}(y)$ and $w_l$ the associated weights.
\end{definition}
We have $(u,v) = \langle u,v\rangle_N^d$ for all polynomials $u$, $v$ such that $\deg u + \deg  v \leq 2N-2$ and $(u,v) = \langle u,v\rangle_N^a$ for all polynomials $u$, $v$ such that $\deg u + \deg  v \leq 2N+1$. For more details about generalized quadrature rules, we refer the reader to~\cite{huang92}.
\bigskip

\noindent\emph{Stiffness and mass matrices for~\eqref{eq20}, \eqref{eq25}}.
Since $u^m_h\in V_N^d$, we can express it as linear combination of $V_N^d$ basis functions, i.e.
\begin{equation}
\label{eq20b}
u^m_h(x) = \sum_{k=0}^{N-3} \tilde{u}_{h,k}^{m,d}\phi^d_k(x).
\end{equation}
The first step is to obtain the frequency coefficients $\tilde{u}_{h,k}^{m,d}$ in \eqref{eq20b}. We take the dispersive inner product on both sides
\begin{equation}
\label{eq26}
\langle u^m_h,\psi_j^d\rangle^d_N = \sum_{k=0}^{N-3} \tilde{u}_{h,k}^{m,d} \langle\phi^d_k,\psi^d_j\rangle^d_N.
\end{equation}
The mass matrix is
\[
\mathbf{M}^d\in \mathbb{R}^{(N-2)\times(N-2)},\quad\mathbf{M}^d_{kj} := \langle\phi^d_k,\psi^d_j\rangle_N^d.
\]
Using the orthogonality relation between $\phi^d_k$ and $\psi^d_j$ gives $\langle\phi^d_k,\psi^d_j\rangle^d_N = 0$ if $|k-j|>3$ and $j+k\leq 2N-8$, see appendix~\ref{app2}. Then, $\mathbf{M}^d$ is a $7$-diagonal matrix. Equation~\eqref{eq26} in matrix form reads
\begin{equation}
\label{eq27b}
\langle u^m_h,\psi_j^d\rangle_N^d = [(\mathbf{M}^d)^T\tilde{\mathbf{u}}_{h}^{m,d}]_j.
\end{equation}
The left-hand side of~\eqref{eq27b} can also be written in matrix form:
\[
\begin{split}
\langle u^m_h,\psi_j^d\rangle_N^d =  \sum_{\ell = 2}^{N-1} & w_{\ell}u^m_h(y_{\ell})\psi_j^d(y_{\ell}) \\
& + \underbrace{w_1u^m_h(-1)\psi^d_j(-1) + w_{N}u^m_h(1)\psi^d_j(1) + w'_{N}\partial_y\left(u^m_h(y)\psi^d_j(y)\right)\bigg|_{y=1}}_{\mathbf{b}_j} \\
& = [\Psi^{d^T}\Omega\,\mathbf{u}^m_h]_j + \mathbf{b}_j,
\end{split}
\]
where
\[
\Psi^d = \begin{bmatrix}
\psi^d_0(y_2) & \dots & \psi^d_{N-3}(y_2)\\
\psi^d_0(y_3) & \dots &\psi^d_{N-3}(y_3)\\
\vdots & & \vdots \\
\psi^d_0(y_{N-1}) & \dots & \psi^d_{N-3}(y_{N-1})
\end{bmatrix},\quad
\Omega = \text{diag}\begin{bmatrix}
w_2\\
\vdots\\
w_{N-1}
\end{bmatrix},\quad
\mathbf{u}^m_h = \begin{bmatrix}
u^m_h(y_2)\\
\vdots\\
u^m_h(y_{N-1})\\
\end{bmatrix}.
\]
We obtain the frequency coefficients
\[
\tilde{\mathbf{u}}_h^{d,m} = (\mathbf{M}^{d})^{-T}\left( \Psi^{d^T}\Omega\,\mathbf{u}^m_h+\mathbf{b}\right).
\]
The second step is to compute the stiffness matrix and the frequency coefficients of the second term of the addition in~\eqref{eq20}. The stiffness matrix is
\[
\mathbf{S}^d\in\mathbb{R}^{(N-2)\times(N-2)},\quad
\mathbf{S}^d_{kj} =\langle p_g\partial_x \phi_k^d + \partial^3_x\phi^d_k,\psi^d_j\rangle_N^d.
\]
\begin{lemma}
$\mathbf{S}^d$ is a 7-diagonal matrix.
\end{lemma}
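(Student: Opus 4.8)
The plan is to combine the integration-by-parts identity that \emph{defines} the dual space $V_N^{d,*}$ with the exactness of the dispersive quadrature and the orthogonality of the Legendre polynomials. It suffices to show $\mathbf{S}^d_{kj}=0$ whenever $|k-j|>3$. A preliminary observation renders the quadrature harmless: if $|k-j|>3$ then, since $0\le k,j\le N-3$, the larger index is at most $N-3$ and the smaller at most $N-7$, so $k+j\le 2N-10$. Consequently the integrand $\bigl(p_g\partial_x\phi_k^d+\partial_x^3\phi_k^d\bigr)\psi_j^d$ has total degree $(k+3)+(j+3)=k+j+6\le 2N-4\le 2N-2$, and the dispersive quadrature is exact on it. Thus for every off-diagonal entry $\mathbf{S}^d_{kj}=\langle p_g\partial_x\phi_k^d+\partial_x^3\phi_k^d,\psi_j^d\rangle_N^d=(p_g\partial_x\phi_k^d+\partial_x^3\phi_k^d,\psi_j^d)$, the exact $L^2$ inner product, and I may work with the latter.

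Next I would exploit the two equivalent representations of this inner product. On one hand, $q:=p_g\partial_x\phi_k^d+\partial_x^3\phi_k^d$ is a polynomial of degree at most $k+3$, hence lies in $\mathrm{span}\{L_0,\dots,L_{k+3}\}$; since $\psi_j^d\in\mathrm{span}\{L_j,\dots,L_{j+3}\}$, Legendre orthogonality forces $(q,\psi_j^d)=0$ as soon as these spans are disjoint, i.e.\ when $j>k+3$. On the other hand, because $\phi_k^d\in V_N^d$ and $\psi_j^d\in V_N^{d,*}$, the defining property of the dual space gives $(q,\psi_j^d)=-\bigl(\phi_k^d,\partial_x(p_g\psi_j^d)+\partial_x^3\psi_j^d\bigr)$ with no surviving boundary terms. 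The new right-hand factor $r:=\partial_x(p_g\psi_j^d)+\partial_x^3\psi_j^d$ has degree at most $j+3$ (its degree is governed by $\partial_x(p_g\psi_j^d)$, not by $\partial_x^3\psi_j^d$), so $r\in\mathrm{span}\{L_0,\dots,L_{j+3}\}$; since $\phi_k^d\in\mathrm{span}\{L_k,\dots,L_{k+3}\}$, orthogonality now forces $(\phi_k^d,r)=0$ whenever $k>j+3$.

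Putting the two cases together, $\mathbf{S}^d_{kj}$ vanishes both when $j-k>3$ (from the direct form) and when $k-j>3$ (from the dual form), which is exactly the assertion that $\mathbf{S}^d$ is $7$-diagonal.

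I expect the only genuine subtlety to be the bookkeeping that keeps the argument honest: the orthogonality estimates are carried out for the continuous $L^2$ inner product, whereas $\mathbf{S}^d$ is assembled with the discrete inner product $\langle\cdot,\cdot\rangle_N^d$, so the degree count of the first paragraph---guaranteeing exactness precisely on the off-diagonal band where $k+j\le 2N-10$---is what licenses passing between the two. The integration-by-parts step itself is already supplied by the characterization of $V_N^{d,*}$ proved above, so no boundary terms need to be re-examined; I would simply make sure the degree of $r$ is read off correctly, since it is the product $p_g\psi_j^d$ rather than $\psi_j^d$ alone whose derivative sets the top degree.
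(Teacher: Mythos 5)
Your proof is correct and follows essentially the same route as the paper: splitting into the cases $j>k+3$ and $k>j+3$, using Legendre orthogonality directly in the first and the defining duality (integration by parts) of $V_N^{d,*}$ in the second. Your bookkeeping is in fact slightly more careful than the paper's---you verify quadrature exactness up front via $k+j\le 2N-10$, and you correctly record $\deg\bigl(\partial_x(p_g\psi_j^d)+\partial_x^3\psi_j^d\bigr)\le j+3$ where the paper states degree $j$---but neither difference alters the structure of the argument.
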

\begin{proof}
We now that $\phi^d_k$ is a polynomial of degree $k+3$. Therefore, $ q(x):= p_g(x)\partial_x \phi_k^d(x) + \partial^3_x\phi^d_k(x)$ is a polynomial of degree $\leq k+3$. We write $q$ as a linear combination of Legendre polynomials up to degree $k+3$:
\[
q(x) = \sum_{i=0}^{k+3} q_i L_i(x).
\]
Let us consider the dispersive inner product $\langle q,\psi^d_j\rangle^d_N$ and $k+3<j$. Then,
\[
\langle q,\psi^d_j \rangle^d_N = \sum_{i=0}^{k+3}q_i \langle L_i,\psi^d_j \rangle^d_N = \sum_{i=0}^{k+3}q_i \langle L_i,L_j + \alpha^*_j L_{j+1} + \beta^*_jL_{j+2} + \gamma^*_j L_{j+3}\rangle^d_N = 0.
\]
The last equation follows from the definition of $\psi^d_j$ and the orthogonality property of the Legendre polynomials. Let $j < k+3$ with $k+j\leq 2N-8$, then (see appendix~\ref{app2})
\[
\langle q,\psi^d_j\rangle^d_N = \langle p_g\partial_x \phi_k^d + \partial^3_x\phi^d_k, \psi^d_j\rangle^d_N = -\langle \phi_k^d, \partial_x(p_g\psi_j^d)+\partial_x^3\psi_j^d\rangle^d_N.
\]

The polynomial $\tilde{q} = \partial_x(p_g\psi_j^d)+\partial_x^3\psi_j^d$ is of degree $j$.  Similarly to $q$, we obtain $\langle \phi_k^d,\tilde{q}\rangle^d_N=0$ and the result follows.
\end{proof}
The frequency coefficients  of the second term on the right-hand side of~\eqref{eq20} are given by
\[
\tilde{\mathbf{p}}^m\in\R^{N-2},\quad\tilde{\mathbf{p}}^m_j = \langle p_g\partial_x p_2^m,\psi_j^d\rangle_N^d,\quad j=0,\dots,N-3.
\]
Notice that $p_g\partial_xp_2^m$ is a polynomial of degree 2. Therefore, it can be written as a linear combination of the Legendre polynomials $L_0$, $L_1$ and $L_2$. Using the orthogonality property of Legendre polynomials we obtain $\langle p_g\partial_xp_2^m,\psi_j^d\rangle_N^d=0$ for $j>2$.
Problem \eqref{eq20} is equivalent to
\begin{equation}
\label{eq28}
(\mathbf{M}^d)^{T} \tilde{\mathbf{u}}^{*,d}_h = \left(\mathbf{M}^d-\frac{\tau}{2}  \mathbf{S}^d\right)^T\tilde{\mathbf{u}}^{m,d}_h -\frac{\tau}{2}\tilde{\mathbf{p}}^m.
\end{equation}

A similar procedure applies to \eqref{eq25}, where we obtain
\begin{equation}
\label{eq28b}
\left(\mathbf{M}^d + \frac{\tau}{2}\mathbf{S}^d\right)^T\tilde{\mathbf{u}}^{m+1,d}_h = \tilde{\mathbf{u}}^{m+1/2,d}-\tilde{\mathbf{p}}_2^{m+1} -\frac{\tau}{2}\tilde{\mathbf{p}}^{m+1}
\end{equation}
with
\[
\tilde{\mathbf{p}}^{m+1}_{2}\in\R^{N-2},\quad \tilde{\mathbf{p}}^{m+1}_{2,j} = \langle p^{m+1}_2,\psi^d_j\rangle^d_N\quad \text{for } j = 0,\dots,N-3.
\]
Both linear systems \eqref{eq28}-\eqref{eq28b} can be solved in $\mathcal{O}(N)$ operations since $\mathbf{M}^d$ and $\mathbf{S}^d$ are 7-diagonal matrices.
\bigskip

\noindent\emph{Stiffness and mass matrices for~\eqref{eq21}}. We can express the functions $u^*$ and $u^{m+1/2}$ as linear combinations of $V_N^a$ basis functions, i.e.
\begin{align}
\label{eq20c}
u^*(x) &= \sum_{k=0}^{N} \tilde{u}_{k}^{*,a}\phi^a_k(x),\\
\label{eq20d}
u^{m+1/2}(x) &= \sum_{k=0}^{N} \tilde{u}_{k}^{m+1/2,a}\phi^a_k(x).
\end{align}
Similarly to the dispersive case, we need the frequency coefficients in~\eqref{eq20c}, \eqref{eq20d}. We take the advection inner product in~\eqref{eq20c}, \eqref{eq20d} on both sides
\begin{align}
\label{eq34}
\langle u^*,\psi_j^a\rangle^a_N &= \sum_{k=0}^{N} \tilde{u}_{k}^{*,a}\langle \phi^a_k,\psi^a_j\rangle^a_N,\\
\label{eq34b}
\langle u^{m+1/2},\psi^a_j\rangle_N^a &= \sum_{k=0}^{N} \tilde{u}_{k}^{m+1/2,a}\langle \phi^a_k,\psi^a_j\rangle_N^a.
\end{align}
Using the orthogonality relation between $\phi^a_k$ and $\psi^a_j$ gives $\langle \phi^a_k,\psi^a_j\rangle^a_N = 0$ if $k\neq j$. Then, the mass matrix
\[
\mathbf{M}^a\in\R^{(N+1)\times (N+1)},\quad \mathbf{M}^a_{kj} =  \langle\phi_k^a,\psi_j^a\rangle_N^a
\]
is a diagonal matrix.
Finally, Problem~\eqref{eq21} is equivalent to
\begin{equation}
\label{eq37}
\left(\mathbf{M}^a + \frac{\tau}{2}\mathbf{S}^a\right)^T\tilde{\mathbf{u}}^{m+1/2,a} = \left(\mathbf{M}^a - \frac{\tau}{2}\mathbf{S}^a\right)^T\tilde{\mathbf{u}}^{*,a}
\end{equation}
with the stiffness matrix $\mathbf{S}^a\in\R^{(N+1)\times (N+1)}$ defined by
\begin{equation}
\label{eq37b}
\mathbf{S}^a_{kj} = \langle g^*\partial_x \phi_k^a,\psi_j^a\rangle_N^a.
\end{equation}
The stiffness matrix $\mathbf{S}^a$ is in general a full matrix. A direct inversion of~\eqref{eq37} requires $\mathcal{O} (N^3)$ operations, thus is not advisable. Applying an iterative scheme is preferable, but multiplying the matrix $\mathbf{S}^a$ with a vector costs $\mathcal O(N^2)$ operations. A more efficient way is to compute $g^*\partial_x u^*$ (and $g^*\partial_x u^{m+1/2}$) in the physical space at the advection collocation points. The point-wise multiplication with $g^*$ costs only $\mathcal O(N)$ operations. The result is then transformed back to the frequency space. Transforming back and forth to the frequency space can be done efficiently by employing the discrete Lagrange transform (DLT) and the inverse discrete Lagrange transform (IDLT) developed in~\cite{hale15}, see appendix~\ref{app3}.

\begin{remark}
For the special case where $g^*$ is a polynomial of degree $n$, the stiffness matrix $\mathbf{S}^a$ is banded with  bandwidth less or equal to $2n$. This implies that for a small $n$ the linear system~\eqref{eq37} is sparse and can be solved in $\mathcal O(N)$ operations without switching from the frequency to the physical space.
\end{remark}
\bigskip

\noindent\emph{Transition matrices}. In order to connect~\eqref{eq37} to \eqref{eq28} and \eqref{eq28b}, it is necessary to transfer information from the dispersive space to the advection space and vice-versa. In particular, the aim is to translate the frequency coefficients from the dispersive space to the advection space in an efficient way.  Let
\[
\begin{split}
& \mathbf{M}^{da}\in\R^{(N-2)\times (N+1)},\quad \mathbf{M}^{da}_{kj} = \langle\phi_k^d,\psi_j^a\rangle_N^a,\\
& \mathbf{M}^{ad}\in\R^{(N+1)\times (N-2)},\quad \mathbf{M}^{ad}_{kj} = \langle\phi_k^a,\psi_j^d\rangle_N^d.
\end{split}
\]
By using the orthogonality property of Legendre polynomials, one can prove that $\mathbf{M}^{da}$ and $\mathbf{M}^{ad}$ are 4-diagonal matrices.
Consider
\[
\langle u^*,\psi^a_j\rangle_N^a = \sum_{k=0}^{N} \tilde{u}_{k}^{*,a}\langle\phi^a_k,\psi^a_j\rangle_N^a\quad \text{and}\quad \langle u^*,\psi^a_j\rangle_N^a= \sum_{k=0}^{N-3} \tilde{u}_{k}^{*,d}\langle \phi^d_k,\psi^a_j\rangle_N^a,
\]
for $j=0,\dots,N.$ Then,
\[
(\mathbf{M}^a)^T\tilde{\mathbf{u}}^{*,a} = (\mathbf{M}^{da})^T\tilde{\mathbf{u}}^{*,d}.
\]
The frequency coefficients $\tilde{\mathbf{u}}^{*,a}$ are obtained directly from the coefficients $\tilde{\mathbf{u}}^{*,d}$ in $\mathcal{O}(N)$ operations. Similarly, consider
\[
\langle u^{m+1/2,d},\psi^d_j\rangle_N^d = \sum_{k=0}^{N-3} \tilde{u}_{k}^{m+1/2,d}\langle\phi^d_k,\psi^d_j\rangle_N^d\quad \text{and}\quad \langle u^{m+1/2},\psi^d_j\rangle_N^d= \sum_{k=0}^{N-1} \tilde{u}_{k}^{m+1/2,a}\langle \phi^a_k,\psi^d_j\rangle_N^d,
\]
for $j=0,\dots, N-3$. Then
\[
(\mathbf{M}^d)^T\tilde{\mathbf{u}}^{m+1/2,d} = (\mathbf{M}^{ad})^T\tilde{\mathbf{u}}^{m+1/2,a}.
\]
The coefficients $\tilde{\mathbf{u}}^{m+1/2,d}$ can be directly obtained from $\tilde{\mathbf{u}}^{m+1/2,a}$ in $\mathcal{O}(N)$ operations.
\bigskip

\noindent\emph{Full discretization}. The implementation in frequency space results in
 \begin{align}
\label{eq36}
(\mathbf{M}^d)^T \tilde{\mathbf{u}}^{*,d}_h &= \left(\mathbf{M}^d-\frac{\tau}{2}  \mathbf{S}^d\right)^T\tilde{\mathbf{u}}^{m,d}_h -\frac{\tau}{2}\tilde{\mathbf{p}}^m, \\
(\mathbf{M}^a)^T\tilde{\mathbf{u}}^{*,a} &= (\mathbf{M}^{da})^T(\tilde{\mathbf{u}}^{*,d}_h + \tilde{\mathbf{p}}_2^{m}),\\
\left(\mathbf{M}^a + \frac{\tau}{2}\mathbf{S}^a\right)^T\tilde{\mathbf{u}}^{m+1/2,a} &= \left(\mathbf{M}^a - \frac{\tau}{2}\mathbf{S}^a\right)^T\tilde{\mathbf{u}}^{*,a}\\
(\mathbf{M}^d)^T\tilde{\mathbf{u}}^{m+1/2,d} &= (\mathbf{M}^{ad})^T\tilde{\mathbf{u}}^{m+1/2,a},\\
\left(\mathbf{M}^d + \frac{\tau}{2}\mathbf{S}^d\right)^T\tilde{\mathbf{u}}^{m+1,d}_h &= \tilde{\mathbf{u}}^{m+1/2,d}-\tilde{\mathbf{p}}_2^{m+1} -\frac{\tau}{2}\tilde{\mathbf{p}}^{m+1}.
\end{align}
The solution $u^{m+1}(x)$ can be reconstructed by
\begin{equation}
\label{eq29b}
u^{m+1}(x) = \sum_{k=0}^{N-3}\tilde{\mathbf{u}}_{h,k}^{m+1}\phi^d_k(x) + p^{m+1}_2(x).
\end{equation}


\section{Numerical results}
\label{numres}
In this section, we present numerical results that illustrate the theoretical investigations of the previous chapters. For that purpose, we consider
\begin{equation}
\label{eq30}
\begin{cases}
\partial_t u + g \partial_x u + \partial_x^3 u = 0, \quad (t,x)\in [0,T]\times \R,\\
 u(0,x) = u^0(x)\\
\end{cases}
\end{equation}
with final time $T=1$ and initial value $u^0(x)=\mathrm{e}^{-x^2}$.
We restrict~\eqref{eq30} to the interval $(-6,6)$ and impose transparent boundary conditions at $x=\pm 6$.
The initial data is chosen such that $|u^0(\pm 6)|\leq 10^{-15}$.

For the numerical simulations, we employ a time discretization with constant step size
\[
\tau = T/M,\quad t^m = \tau m,\quad 0\leq m\leq M
\] and a space discretization given by the dual-Petrov--Galerkin variational formulation with $N$ collocation points. We consider the error $\ell^2$ of the full discretization defined as
\[
\Vert\text{err}\Vert_{\ell^2} = \sqrt{\tau \sum_{m=1}^M (\text{err}^m)^2},
\]
where
\[
\text{err}^m = \sqrt{ \frac{ \sum_{j} \left(u^m_{\text{ref}}(x_j)-u^m_N(x_j)\right)^2}{\sum_{j} \left(u^m_{\text{ref}}(x_j)\right)^2} }
\]
is the relative $\ell^2$ spatial error computed at time $t^m = \tau m$. The points $x_j$ are chosen to be equidistant in $[-6,6]$ with $0\leq j\leq J=2^7$. Finally, the function
$u^m_{\text{ref}}$ is either a reference solution or the exact solution, if available. The function $u^m_N$ is the numerical solution at time $t^m$ employing $N$ collocation points.

\begin{example}[Constant advection]
We consider~\eqref{eq30} with constant advection $g(x)=6$. This is the same problem which is considered in~\cite{besse16}. The setting reduces the advection equation in the modified splitting~\eqref{eq2b} to the identity map. Even if the time-splitting is trivial, for this particular problem the exact solution can be computed via Fourier transform, see~\cite{besse16}. Consequently the constant advection problem offers a good benchmark for testing the convergence of the proposed numerical method in that context.

\begin{figure}[t]
\centering
\includegraphics[scale=.3]{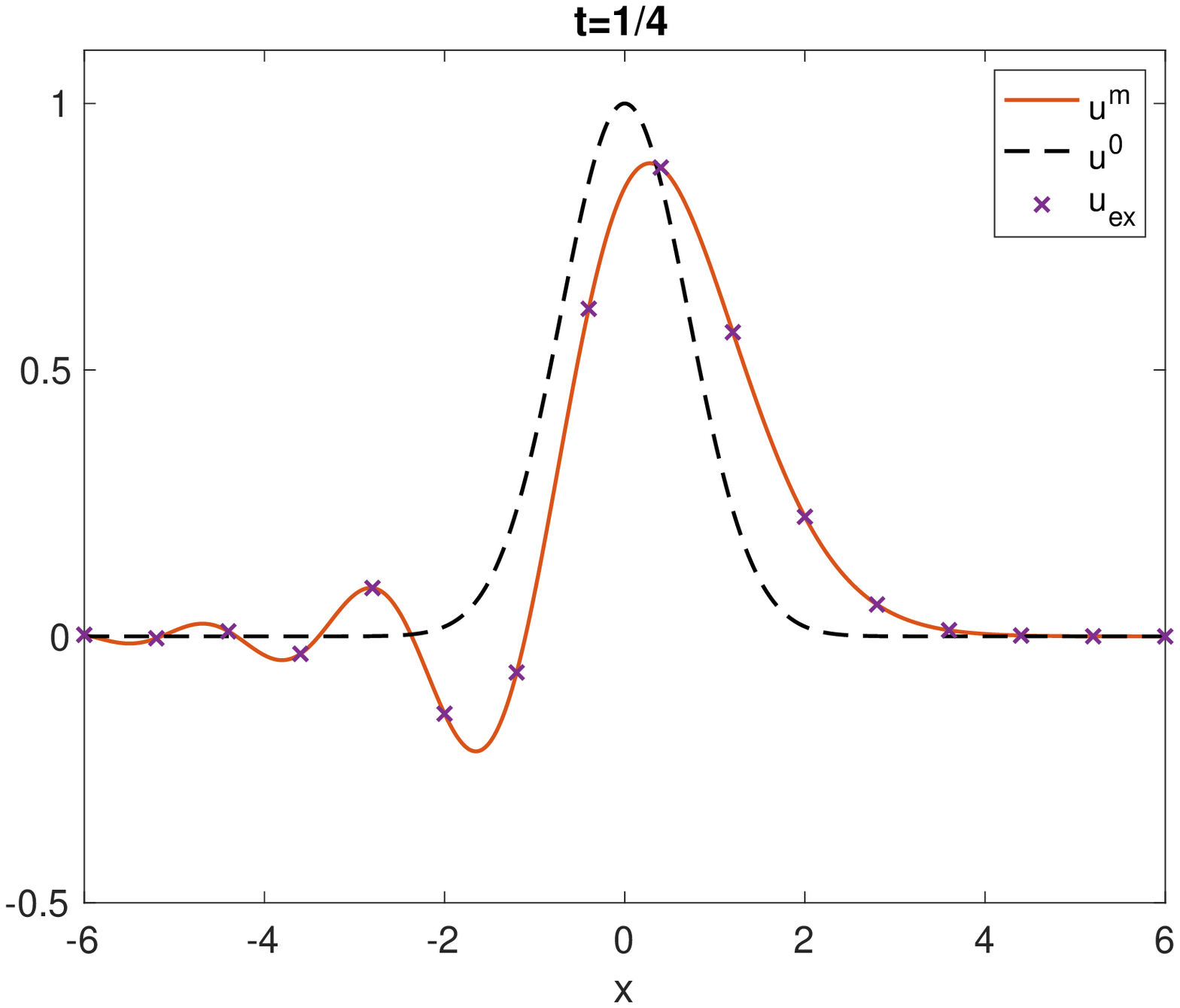}
\includegraphics[scale=.3]{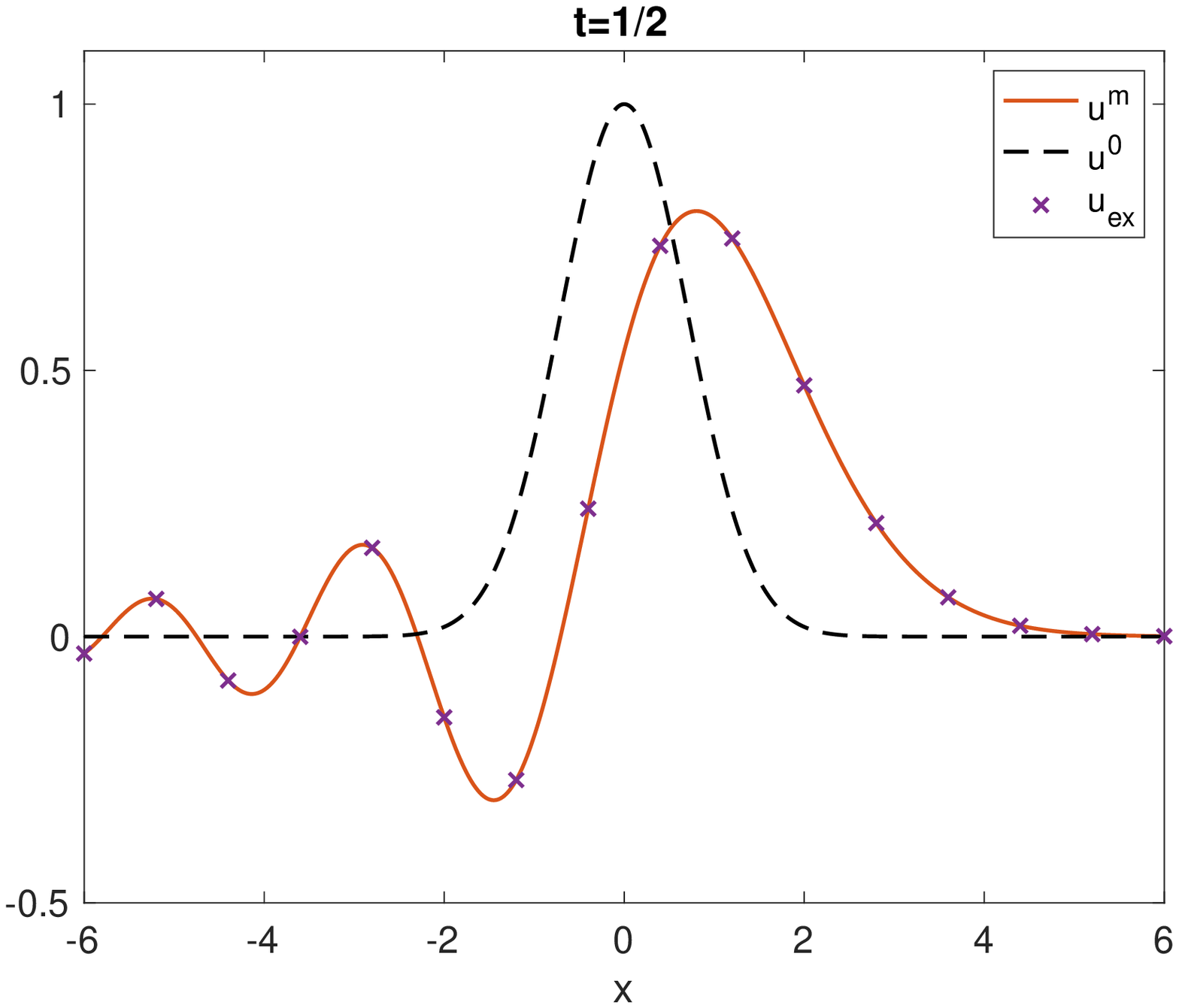}
\includegraphics[scale=.3]{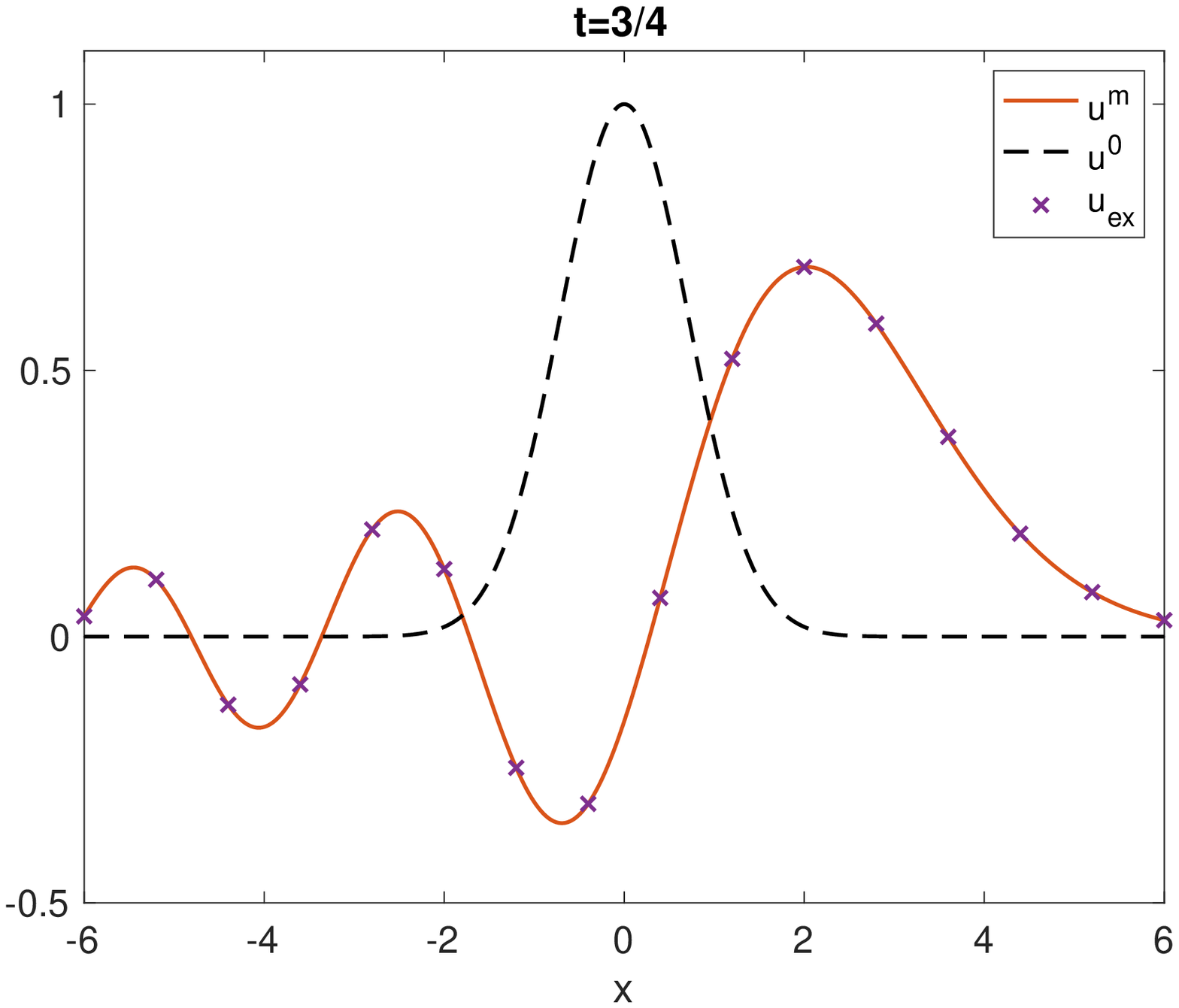}
\includegraphics[scale=.3]{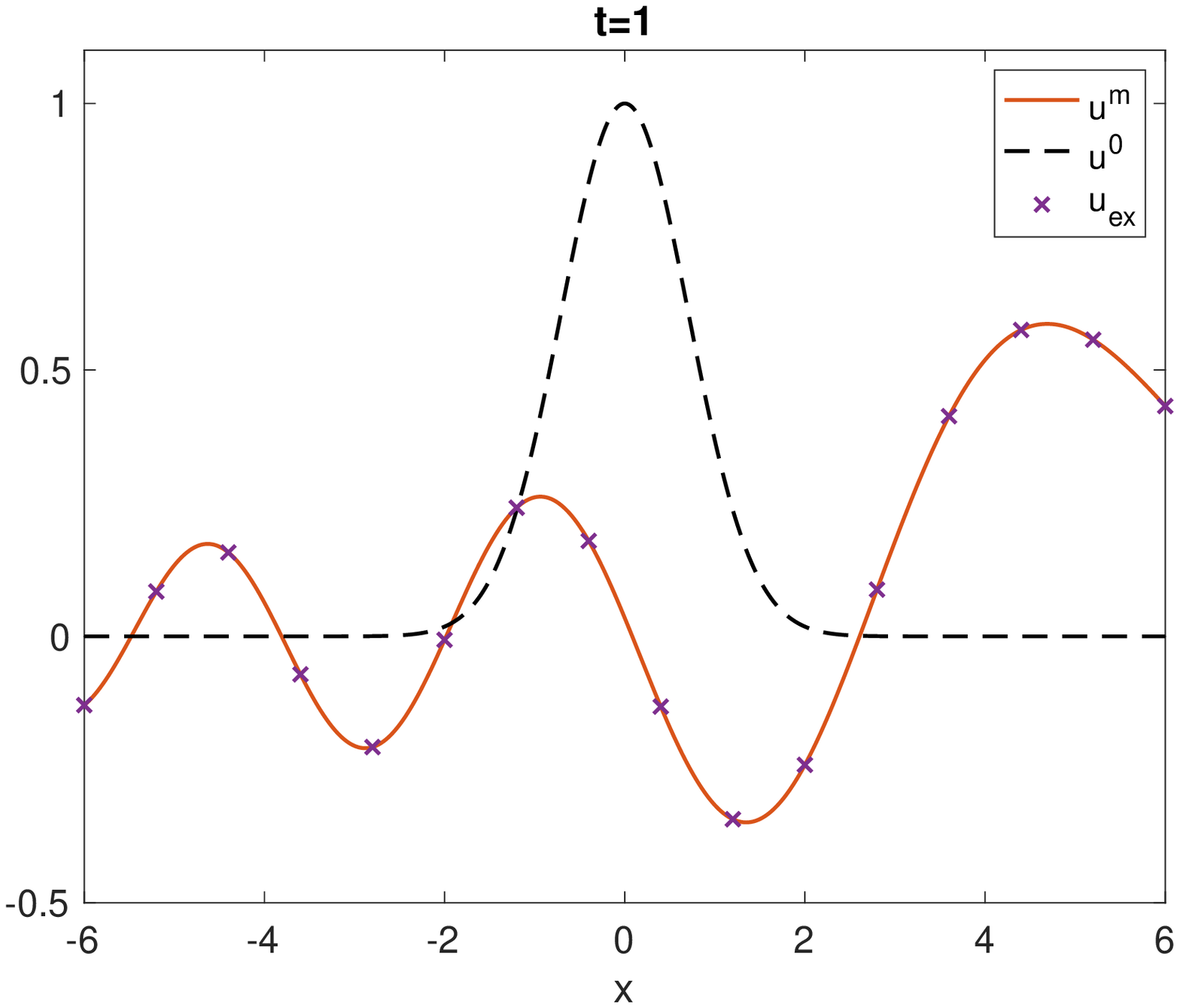}
\caption{Snapshots of the exact solution $u_{\mathrm{ex}}$ and the numerical solution $u^m_N$ for $g=6$ and $t=\frac{1}{4}$, $\frac{1}{2}$, $\frac{3}{4}$, $1$ with $\tau=2^{-12}$. The number of collocation points is set to $N=2^6$. We notice that the cross marks representing the exact solution lie on the numerical solution.}
\label{fig1}
\end{figure}

In Fig.~\ref{fig1} snapshots of the numerical solution $u^m_N$ for $t = \frac{1}{4},$ $\frac{1}{2},$ $\frac{3}{4}$, $1$ and $\tau = 2^{-12}$ are shown. Notice that the numerical solution ``leaves'' the domain at the boundary $x=-6$ without any reflection. As time increases the solution moves to the right and re-enters the computational domain. Finally, the solution matches the boundary at $x=6$ without any reflection.

In Table~\ref{tab1} the full discretization error between the numerical solution and the exact solution varying $N$ and $M$ is reported. In particular, in Table~\ref{tab1} (left) the number of time steps $M$ is fixed to $2^{12}$ and the number of collocation points $N$ is varying from 24 to 40. In this way the time discretization error is small enough to be negligible with respect to the spatial error. The value $\alpha$ denotes the slope of the line obtained by connecting two subsequent error values and varying $N$ in a semi-logarithmic plot . More specifically, let $N_1$ and $N_2$ with $N_1<N_2$ be two subsequent values of $N$ and $\lVert\mathrm{err}_1\rVert_{\ell^2}$, $\lVert\mathrm{err}_2\rVert_{\ell^2}$  the associated error values. Then
\[
\frac{\lVert\mathrm{err}_2\rVert_{\ell^2}}{\lVert\mathrm{err}_1\rVert_{\ell^2}} = \exp\left(-\alpha\cdot(N^2_2-N^2_1)\right).
\]
Notice that $\alpha$ remains constant when $N$ is varying, which confirms the spectral accuracy of the numerical scheme.
\bigskip

In Table~\ref{tab1} (right) the number of collocation points is fixed to $2^6$ and the number of time steps $M$ is varying from $2^5$ to $2^8$. In this way the space error is small enough to be negligible with respect to the time error. The value $\beta$ denotes the slope of the line obtained connecting two subsequent error values and varying $M$ in a double-logarithmic plot. More specifically, let $M_1$ and $M_2$ with $M_1<M_2$ be two subsequent values of  $M$ and $\lVert\mathrm{err}_1\rVert_{\ell^2}$, $\lVert\mathrm{err}_2\rVert_{\ell^2}$  the associated error values. Then
\[
\frac{\lVert\mathrm{err}_2\rVert_{\ell^2}}{\lVert\mathrm{err}_1\rVert_{\ell^2}}= \left(\frac{M_2}{M_1}\right)^{-\beta}.
\]
We can clearly see $\beta\approx 2$, which confirms second order accuracy in time.

\begin{table}[!h]
\begin{center}
\small
\begin{tabular}{c c c}
$N$ & $\lVert\mathrm{err}\rVert_{\ell^2}$ & $\alpha$\Bstrut\\
\hline
$24$ & $2.6141\mathrm{e}-03$ & -- \Tstrut\\
$32$ & $8.7517\mathrm{e}-05$ & $7.2821\mathrm{e}-03$ \\
$40$ & $1.8603\mathrm{e}-06$ & $6.8540\mathrm{e}-03$ \\
$48$ & $3.5613\mathrm{e}-08$ & $6.5603\mathrm{e}-03$ \\
\hline
\end{tabular}
\hspace{2cm}
\begin{tabular}{c c c}
$M$ & $\lVert\mathrm{err}\rVert_{\ell^2}$ & $\beta$\Bstrut\\
\hline
$2^5$ & $4.1849\mathrm{e}-04$ & -- \Tstrut\\
$2^6$ & $1.0995\mathrm{e}-04$ & $1.9283$ \\
$2^7$ & $2.7559\mathrm{e}-05$ & $1.9963$ \\
$2^8$ & $6.8668\mathrm{e}-06$ & $2.0048$ \\
\hline
\end{tabular}
\caption{We present the full discretization error $\lVert \mathrm{err}\rVert_{\ell^2}$ for constant $g$. On the left side $M$ is fixed to $2^{12}$ so that the time error is negligible w.r.t.~the spatial error. On the right side $N$ is fixed to $2^6$ so that the spatial error is negligible w.r.t.~the time error. In both tables, errors are obtained testing $u^m_N$ against the exact solution computed via Fourier transform as in~\cite{besse16}. The fact that $\alpha$ remains constant confirms the spectral accuracy of the proposed method, while the fact that $\beta\approx 2$ confirms the second order in time.}
\label{tab1}
\end{center}
\end{table}
\end{example}

\begin{example}
We consider~\eqref{eq30} with $g(x) = - x^3/54 + x + 3$. As mentioned in section~\ref{freqspace}, for $g$ being a low degree polynomial, the stiffness matrix $\mathbf{S}^a$ results in a banded matrix. Therefore, the linear system associated to the advection equation can be solved in $O(N)$ operations. The exact solution for this problem is not known, so we test the numerical solution $u^m_N$ against a reference solution $u^m_{\text{ref}}$ computed using a significantly greater number of points (both in time and space).

\begin{figure}[t]
\centering
\includegraphics[scale=.3]{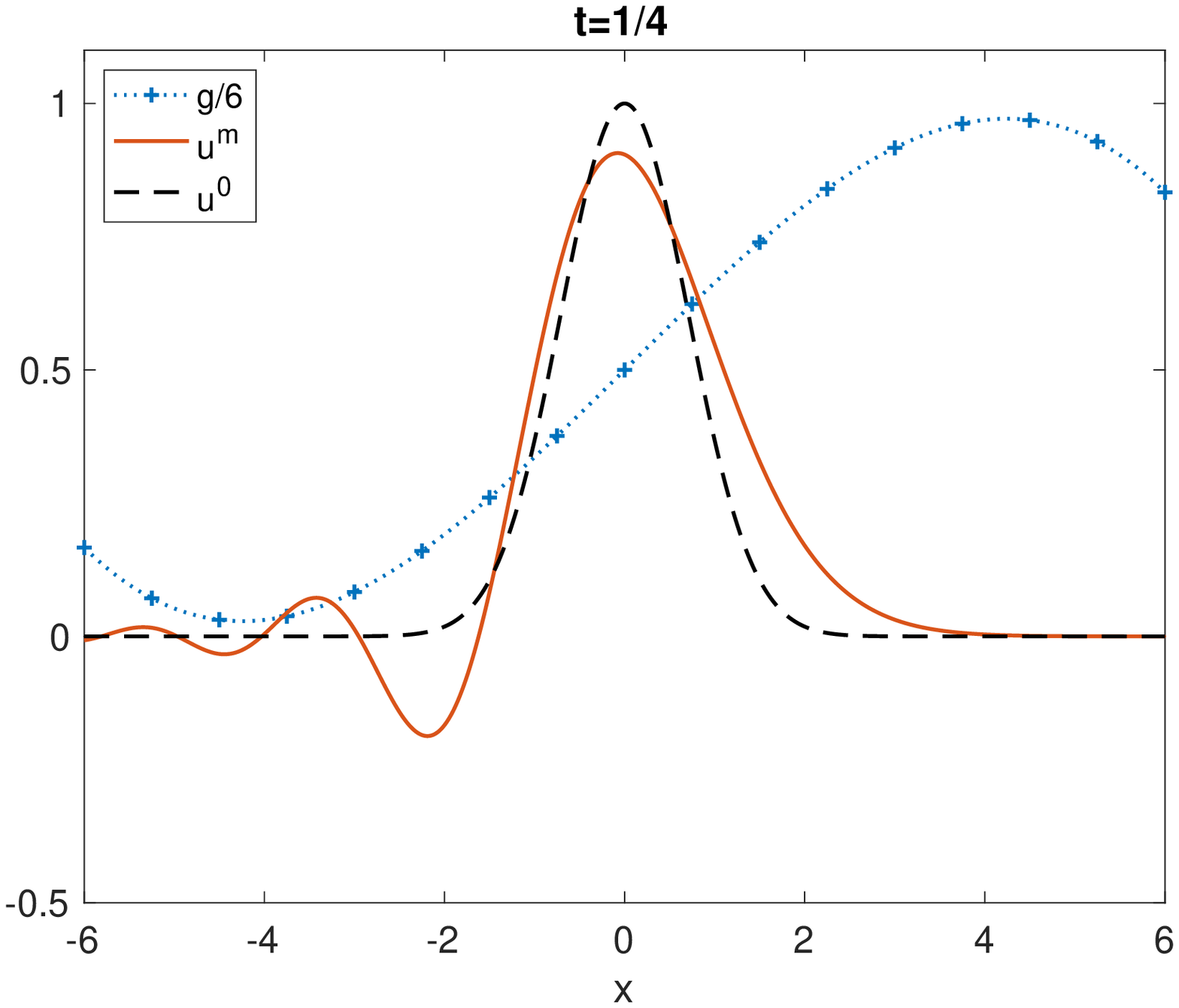}
\includegraphics[scale=.3]{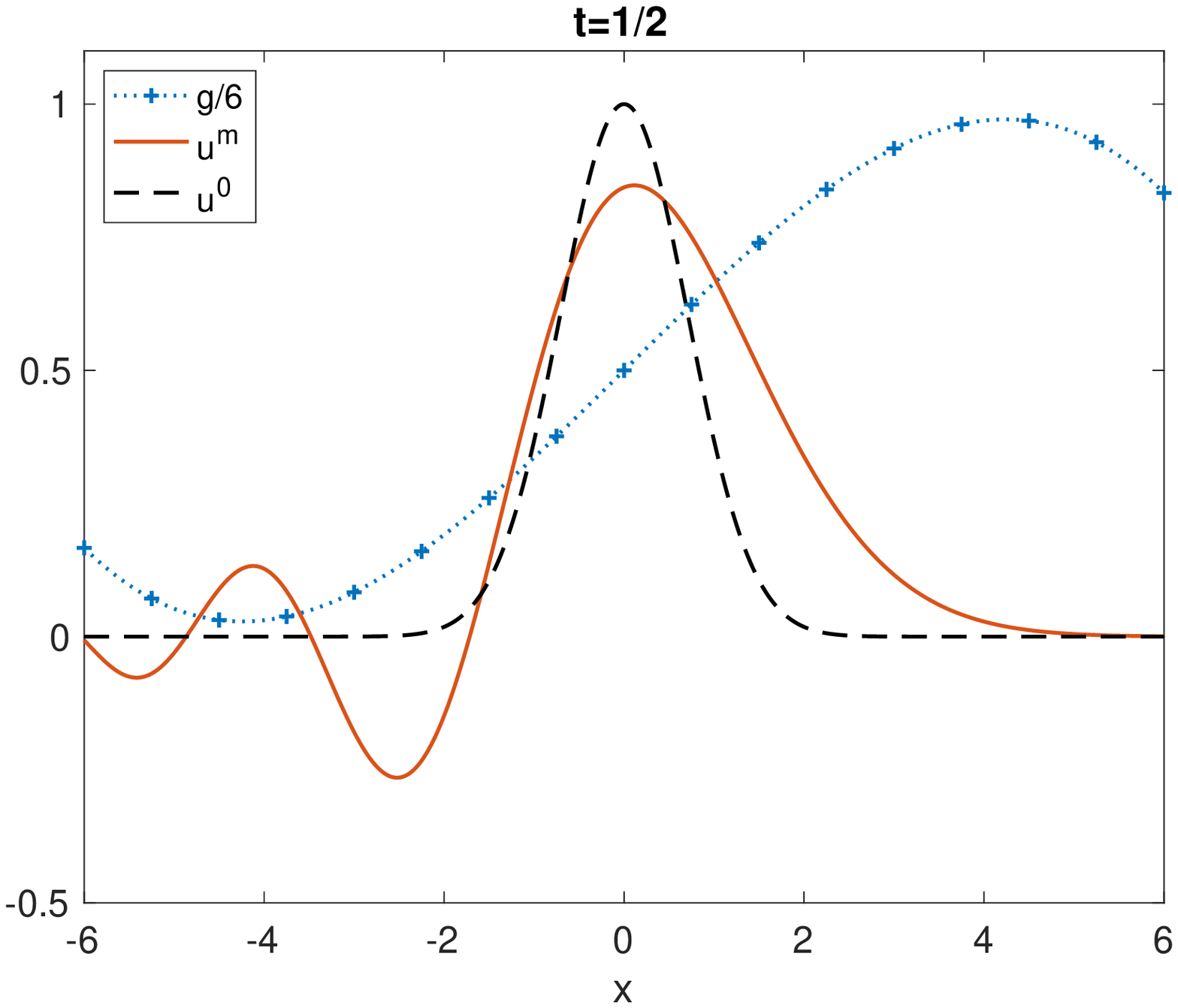}
\includegraphics[scale=.3]{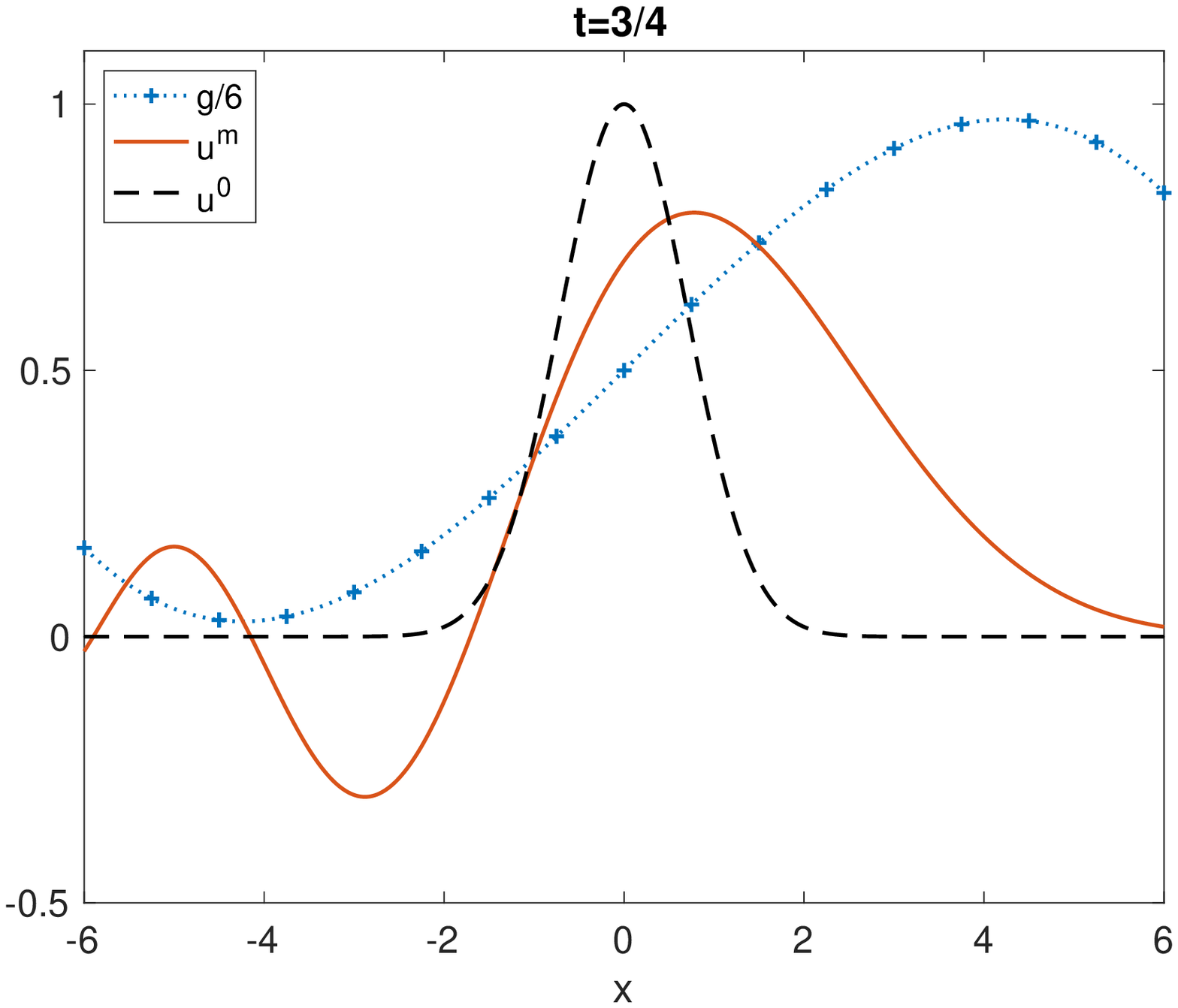}
\includegraphics[scale=.3]{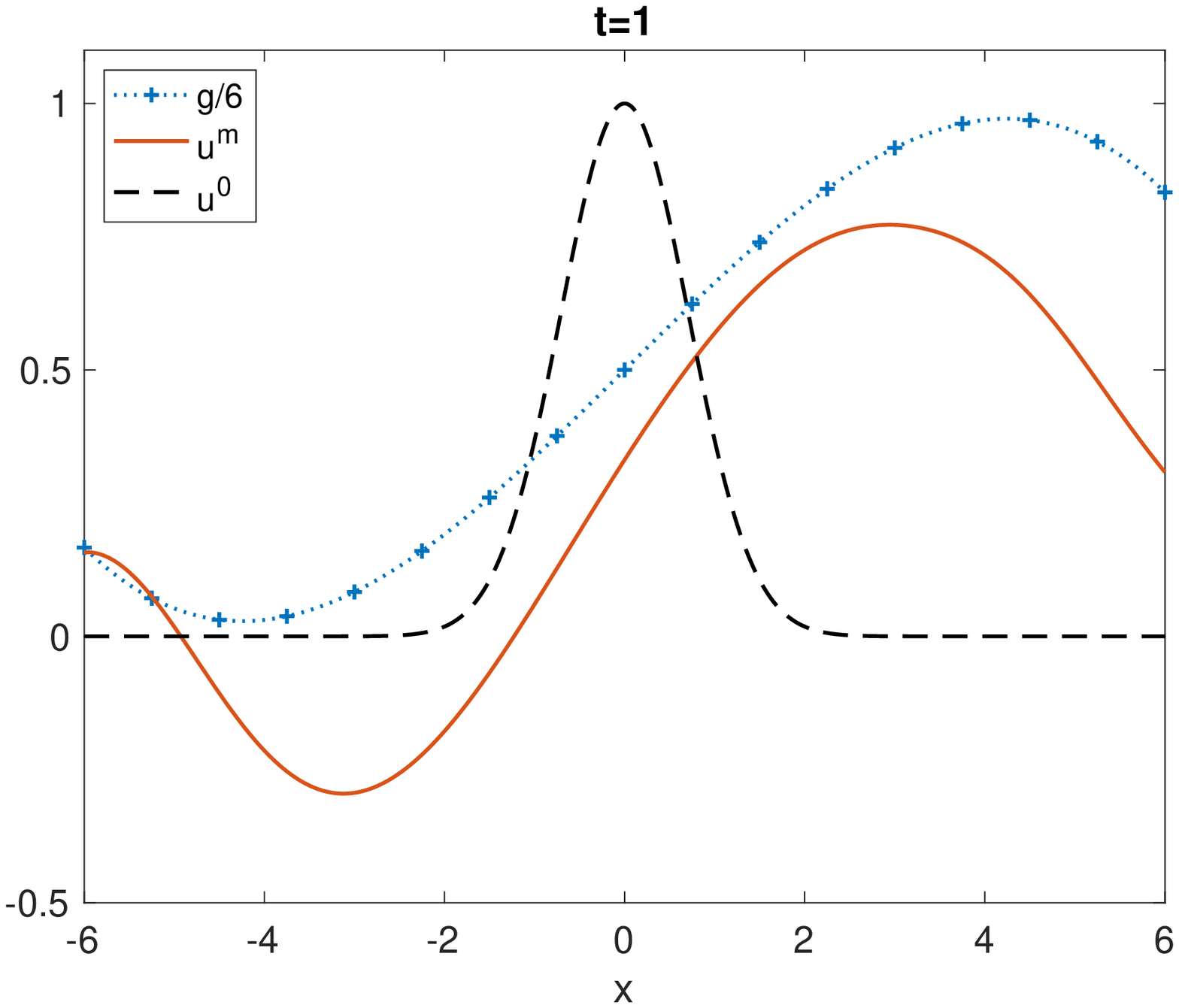}
\caption{Snapshots of the numerical solution $u^m_N$ for $g(x) = - x^3/54 + x + 3$ and $t=\frac{1}{4}$, $\frac{1}{2}$, $\frac{3}{4}$, $1$ with $\tau=2^{-12}$. The number of collocation points is set to $N=2^6$.}
\label{fig2}
\end{figure}

In Fig.~\ref{fig2} snapshots of the numerical solution $u^m_N$ for $t = \frac{1}{4}$, $\frac{1}{2}$, $\frac{3}{4}$, $1$ are shown. The solution is dragged to the right with an increasing speed. No appreciable reflections can be seen at the boundaries. Similarly to example 1, we report in Table~\ref{tab2} full discretization errors varying $N$ and $M$ with respect to a reference solution $u^m_{\text{ref}}$ computed using $N_{\text{ref}}=2^6$ and $M_{\text{ref}}=2^{12}$.
\begin{table}[!h]
\begin{center}
\small
\begin{tabular}{c c c}
$N$ & $\lVert\mathrm{err}\rVert_{\ell^2}$ & $\alpha$\Bstrut\\
\hline
$28$ & $1.2947\mathrm{e}-04$ & -- \Tstrut\\
$32$ & $2.4451\mathrm{e}-05$ & $6.9448\mathrm{e}-03$ \\
$36$ & $3.9950\mathrm{e}-06$ & $6.6605\mathrm{e}-03$ \\
$40$ & $6.0920\mathrm{e}-07$ & $6.1863\mathrm{e}-03$ \\
\hline
\end{tabular}
\hspace{2cm}
\begin{tabular}{c c c}
$M$ & $\lVert\mathrm{err}\rVert_{\ell^2}$ & $\beta$\Bstrut\\
\hline
$2^5$ & $3.7544\mathrm{e}-04$ & -- \Tstrut\\
$2^6$ & $1.0120\mathrm{e}-04$ & $1.8914$ \\
$2^7$ & $2.5507\mathrm{e}-05$ & $1.9882$ \\
$2^8$ & $6.3490\mathrm{e}-06$ & $2.0063$ \\
\hline
\end{tabular}
\caption{We present the full discretization error $\lVert\mathrm{err}\rVert_{\ell^2}$ for $g(x) = -x^3/54+x+3$.  On the left side $M$ is fixed to $2^{12}$ so that the time error is negligible w.r.t. the spatial error. On the right side $N$ is fixed to $2^6$ so that the spatial error is negligible w.r.t. the time error. In both tables, errors are obtained testing the numerical solution $u^m_N$ against a reference solution $u^m_{\text{ref}}$ using $N_{\text{ref}}=2^6$ and $M_{\text{ref}}=2^{12}$ points.}
\label{tab2}
\end{center}
\end{table}
\end{example}

\begin{example}
We consider~\eqref{eq30} with $g(x) = \mathrm{e}^{-(x+6)^2} + \mathrm{e}^{-x^2} + \mathrm{e}^{-(x-6)^2}-\frac{1}{2}$. This example is interesting because of $g$ is not polynomial and its sign alternates. The produced effects are a concentration of mass at the points $\bar{x}$ such that $g(\bar{x}) = 0$, $\partial_x g(\bar{x}) < 0$ and a thinning out where $g(\bar{x})=0$, $\partial_x g(\bar{x}) >0$. Snapshots of the numerical solution that illustrate this phenomena are shown in Fig.~\ref{fig3}. No reflections are detected at the boundaries, as expected.

\begin{figure}[t]
\centering
\includegraphics[scale=.3]{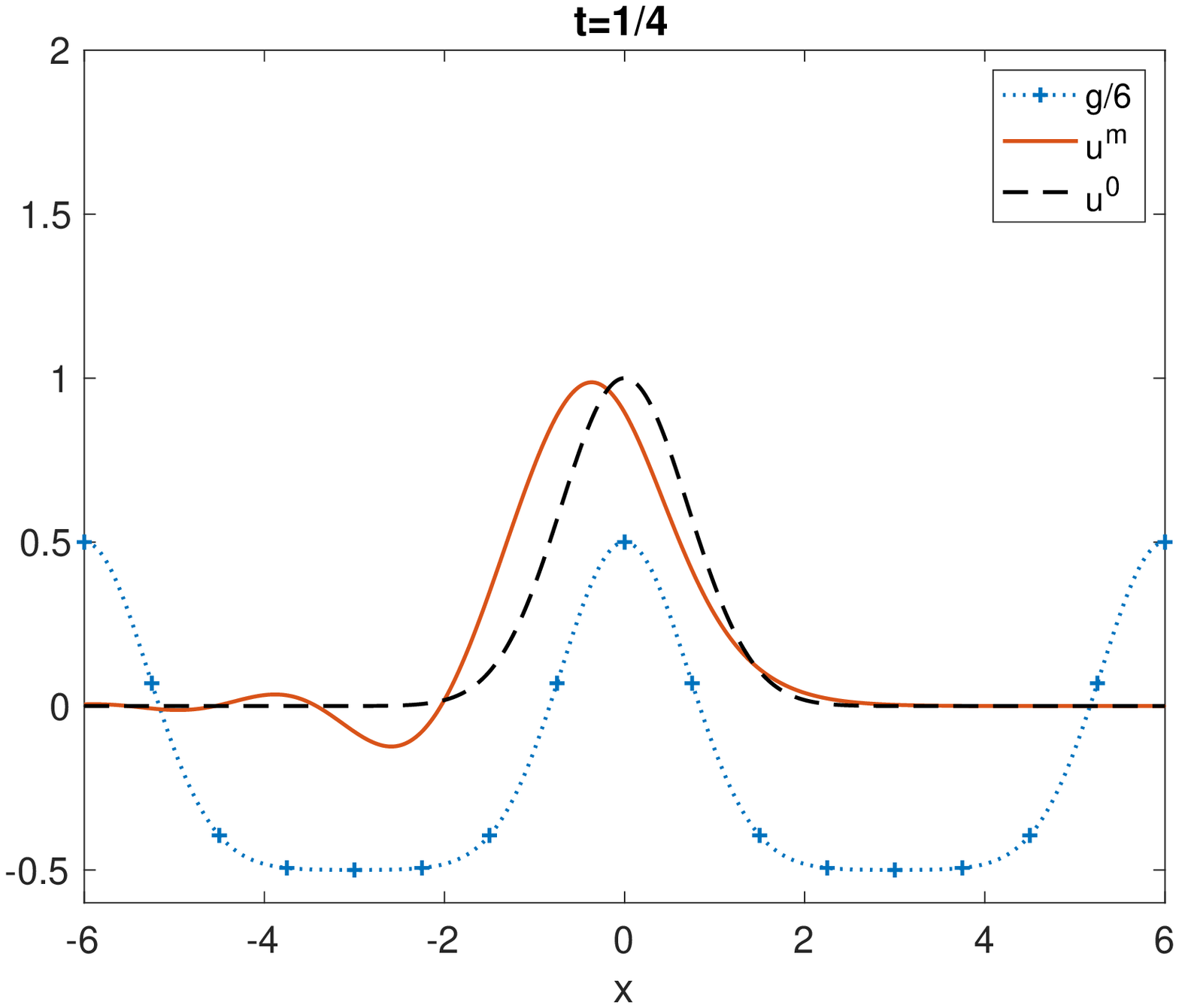}
\includegraphics[scale=.3]{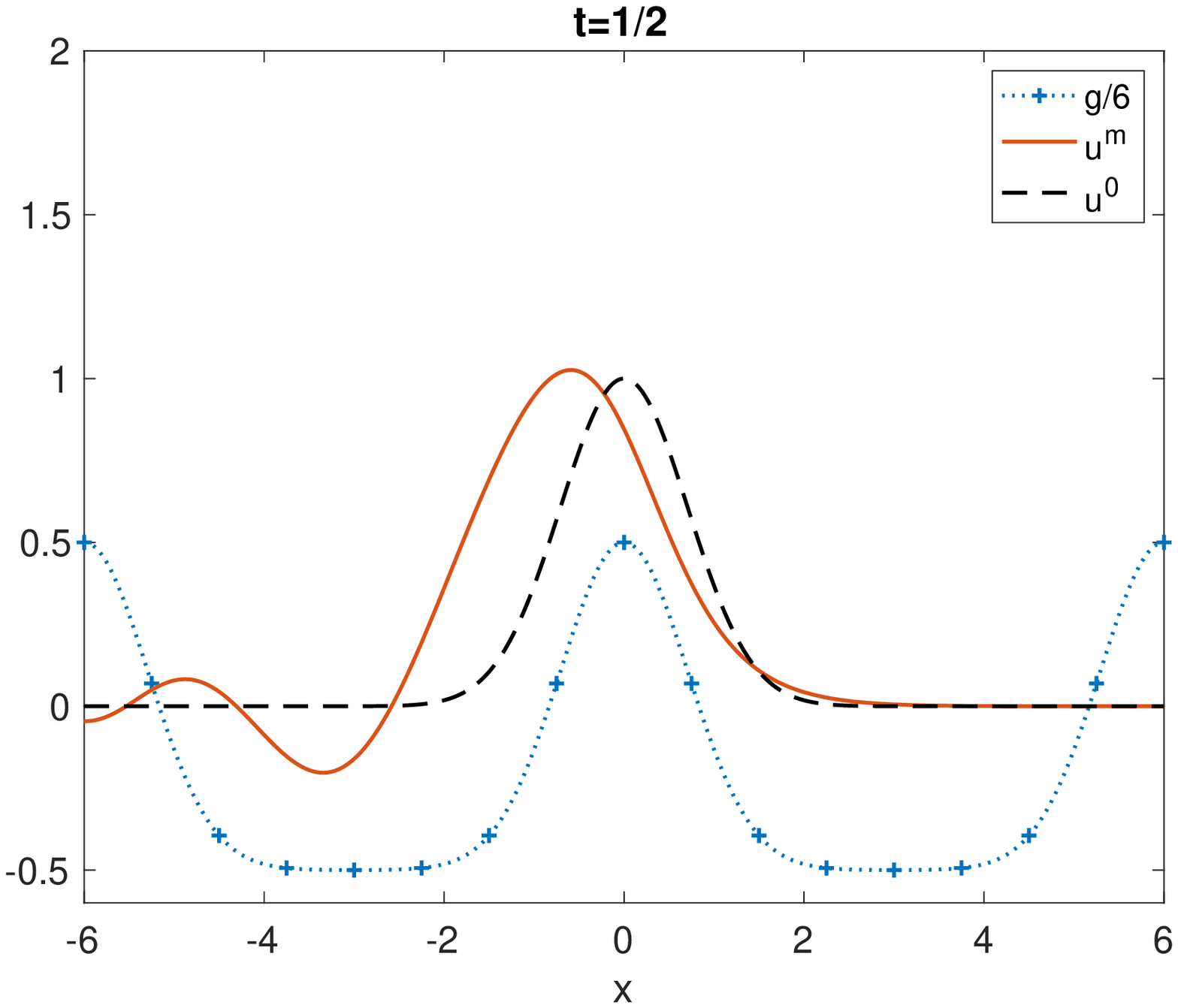}
\includegraphics[scale=.3]{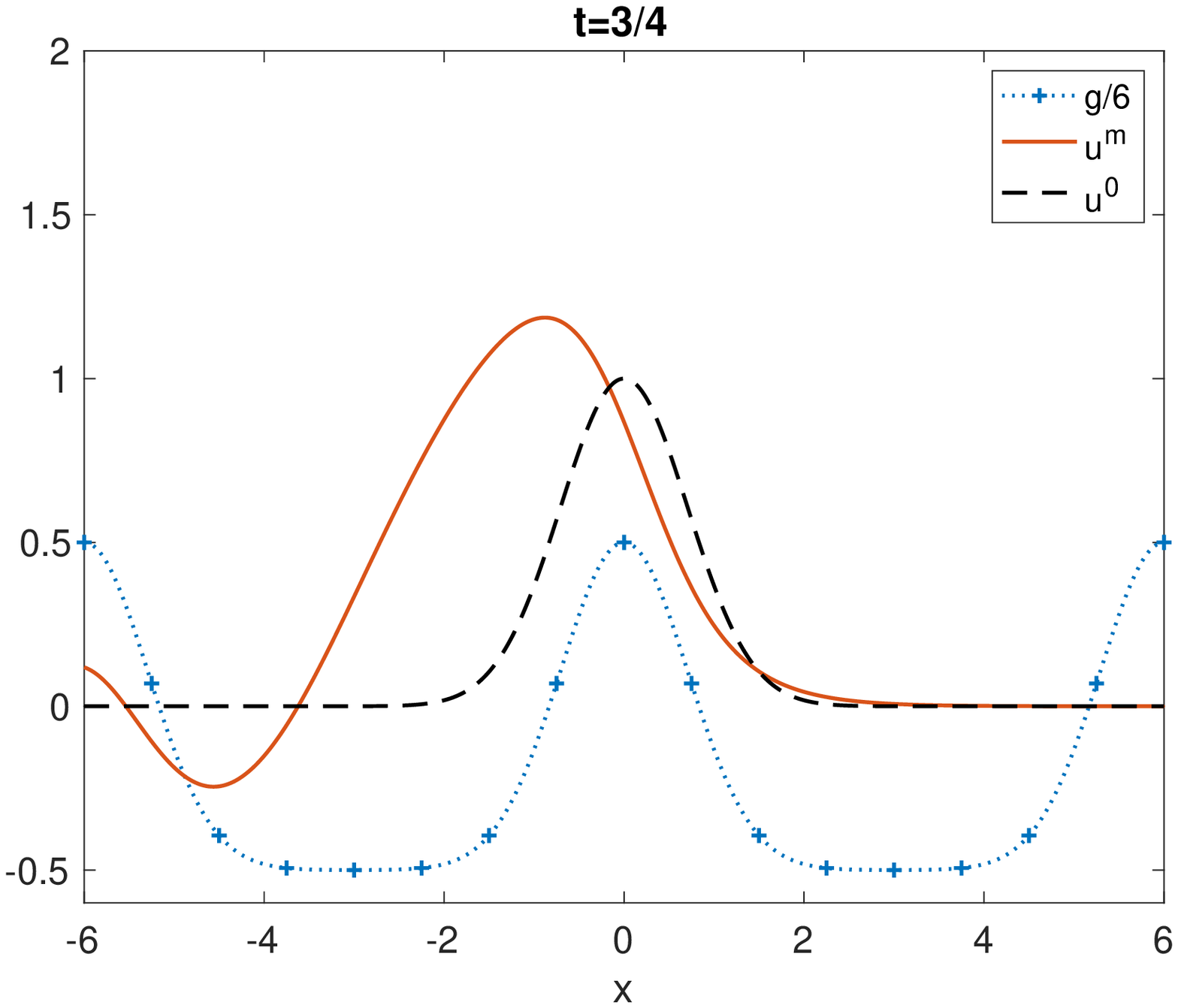}
\includegraphics[scale=.3]{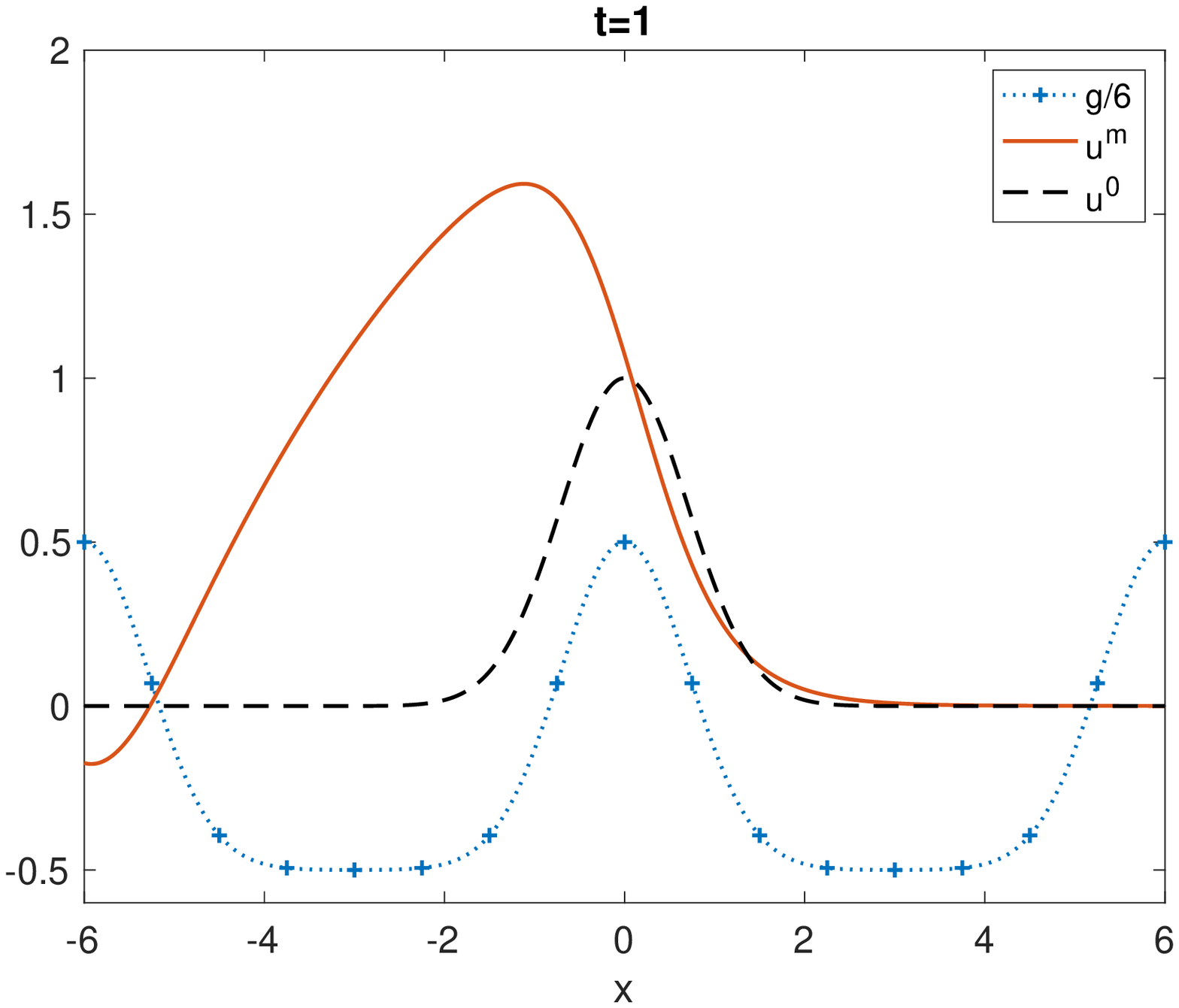}
\caption{Snapshots of the numerical solution $u^m_N$ for $g(x) = \mathrm{e}^{-(x+6)^2} + \mathrm{e}^{-x^2} + \mathrm{e}^{-(x-6)^2}-\frac{1}{2}$ and $t=\frac{1}{4}$, $\frac{1}{2}$, $\frac{3}{4}$, $1$ with $\tau=2^{-12}$.  The number of collocation points is set to $N=2^6$.}
\label{fig3}
\end{figure}

Similarly to example 2, we report in Table~\ref{tab3} the full discretization error by varying $N$ and $M$ with respect to a reference solution $u^m_{\text{ref}}$ computed using $N_{\text{ref}}=2^6$ and $M_{\text{ref}}=2^{12}$.
In Table~\ref{tab3} (left) we observe a smaller value $\alpha$ with respect to Table~\ref{tab1} and Table~\ref{tab2}. Therefore, spatial convergence is slower with respect to examples 1 and 2, but still spectral accuracy is achieved. The slower convergence rate is related to the variations of the function $g^*$, which are greater in magnitude than in examples 1 and 2.

\begin{table}[!ht]
\begin{center}
\small
\begin{tabular}{c c c}
$N$ & $\lVert\mathrm{err}\rVert_{\ell^2}$ & $\alpha$\Bstrut\\
\hline
$28$ & $5.9253\mathrm{e}-03$ & -- \Tstrut\\
$32$ & $2.6962\mathrm{e}-03$ & $3.5940\mathrm{e}-03$ \\
$36$ & $1.1380\mathrm{e}-03$ & $3.3916\mathrm{e}-03$ \\
$40$ & $4.5237\mathrm{e}-04$ & $3.1951\mathrm{e}-03$ \\
\hline
\end{tabular}
\hspace{2cm}
\begin{tabular}{c c c}
$M$ & $\lVert\mathrm{err}\rVert_{\ell^2}$ & $\beta$\Bstrut\\
\hline
$2^5$ & $2.4901\mathrm{e}-04$ & -- \Tstrut\\
$2^6$ & $7.2581\mathrm{e}-05$ & $1.7786$ \\
$2^7$ & $1.9937\mathrm{e}-05$ & $1.8642$ \\
$2^8$ & $5.0684\mathrm{e}-06$ & $1.9758$ \\
\hline
\end{tabular}
\caption{We present full error $\lVert\mathrm{err}\rVert_{\ell^2}$ for $g(x) = \mathrm{e}^{-(x+6)^2} + \mathrm{e}^{-x^2} + \mathrm{e}^{-(x-6)^2}-\frac{1}{2}$.  On the left side $M$ is fixed to $2^{12}$ so that the time error is negligible w.r.t. the spatial error. On the right side $N$ is fixed to $2^6$ so that the spatial error is negligible w.r.t. the time error. In both tables, errors are obtained testing the numerical solution $u^m_N$ against a reference solution $u^m_{\text{ref}}$ using $N_{\text{ref}}=2^6$ and $M_{\text{ref}}=2^{12}$ points.}
\label{tab3}
\end{center}
\end{table}

In Fig.~\ref{fig4} we collect error plots for examples 1,2 and 3. For all numerical tests we observe second order in time and the typical exponential convergence $\mathrm{exp}(-\alpha N^2)$, $\alpha>0$ in space.

\begin{figure}[t]
\centering
\includegraphics[scale=.38]{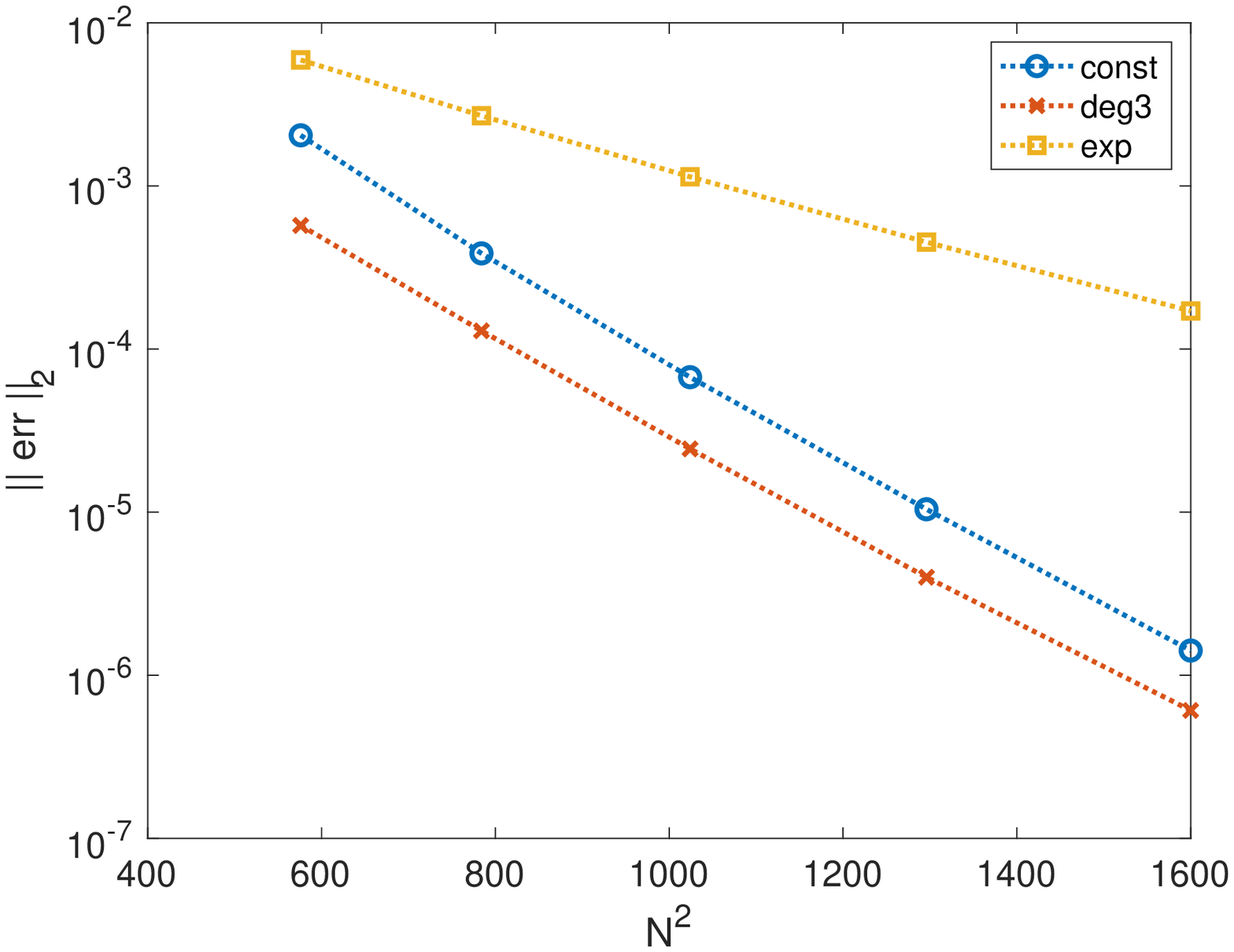}
\includegraphics[scale=.38]{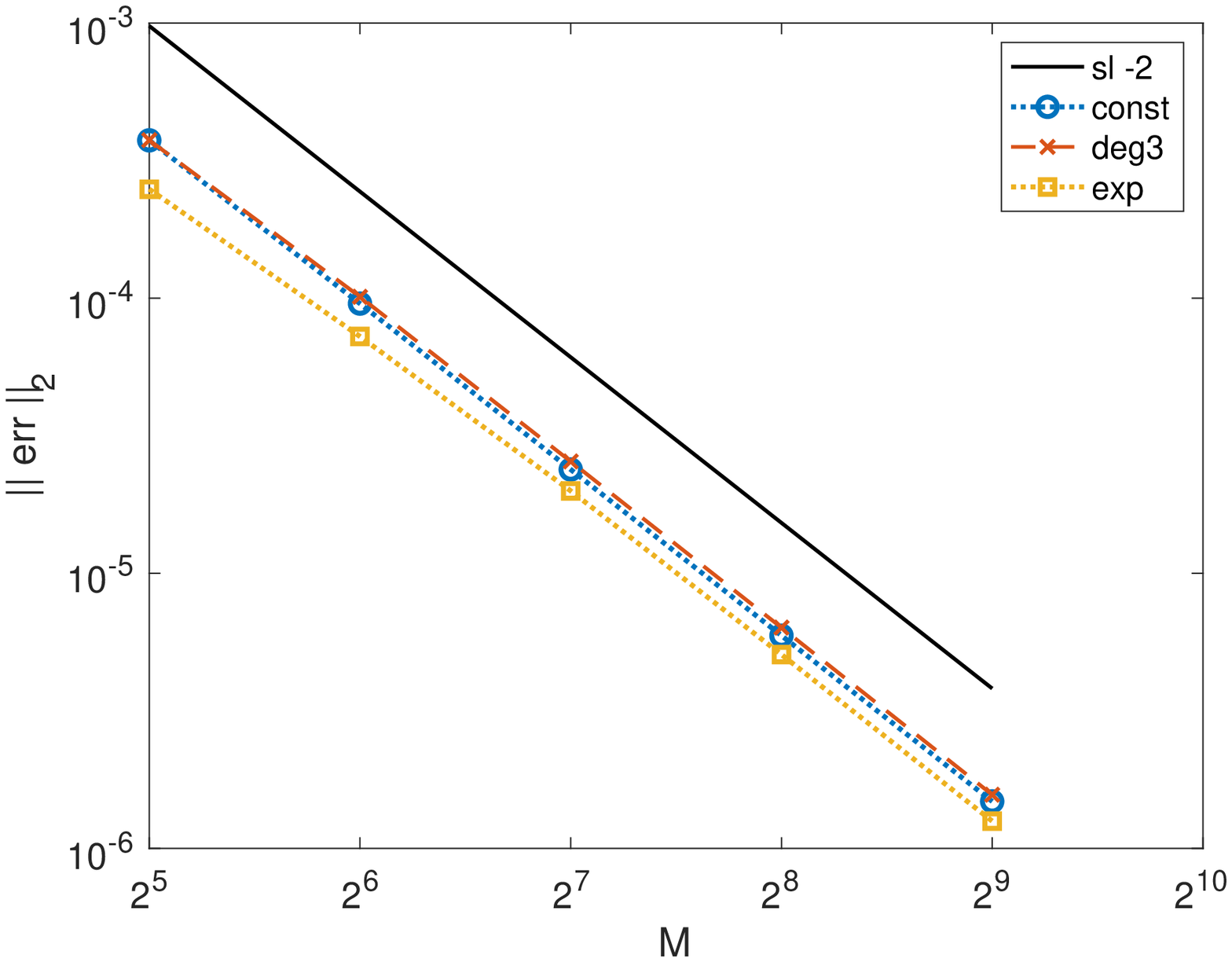}
\caption{Dotted lines show the full discretization errors $\lVert\text{err}\rVert_{\ell^2}$ between numerical solutions and a reference solutions for examples 1 (blue circles), 2 (red stars) and 3 (yellow squares).\\ \emph{(Left plot)}. On the $x$-coordinate the number of collocation points $N$, squared, varying from $24$ to $40$. On the $y$-coordinate the full discretization error $\lVert\mathrm{err}\rVert_{\ell^2}$ with $M=2^{12}$ fixed. For $N=40$ collocation points accuracy to $10^{-6}$ is achieved for examples 1 and 2, while for example 3 the accuracy is $10^{-4}$.\\
\emph{(Right plot)}. On the $x$-coordinate the number of time steps $M$ varying from $2^5$ to $2^9$. On the $y$-coordinate the full discretization error $\lVert\mathrm{err}\rVert_{\ell^2}$ with $N=2^6$ fixed. In black, a solid line of slope $-2$. Second order in time is observed for examples 1, 2 and 3.}
\label{fig4}
\end{figure}
\end{example}

The numerical experiments confirm that the proposed approach performs well in the one dimensional case. However, the extension to higher dimensions is not straightforward. Transparent boundary conditions together with the pseudo-spectral discretization become more involved to compute. This poses a real challenge and is object of future studies.

\clearpage
\bibliographystyle{siam}

\clearpage

\appendix
\section{Finding the coefficients in \eqref{eq18}}
\label{app1}
The Legendre polynomials $L_j(x)$ satisfy the following orthogonality relation
\[
(L_j,L_k) = \delta_{jk}\frac{2}{2j + 1}.
\]
Further, at $x=\pm 1$ we have
\begin{equation}
\label{eq38}
\begin{split}
L_j(\pm1) &= (\pm 1)^j,\\
\partial_x L_j(\pm 1) &= (\pm 1)^{j-1}\frac{(j)_2}{2},\\
\partial_x^2 L(\pm 1) &= (\pm1)^j \frac{(j-1)_4}{8},
\end{split}
\end{equation}
where $(j)_k = j(j+1),\dots(j+k-1)$. Inserting the dispersive basis function $\phi^d_j$ given in~\eqref{eq18} in the boundary relations of the space $V^d_N$ leads to the following linear system for $(\alpha_j,\beta_j,\gamma_j)^T$:
\[
\mathbf{A}\begin{bmatrix}
\alpha_j\\
\beta_j\\
\gamma_j
\end{bmatrix} = \mathbf{b}
\]
with $\mathbf{A}\in\R^{3\times 3}$, $\mathbf{b}\in\R^3$,
\[
\begin{split}
a_{11} &= -(g(a) + Y^0_2) + Y^0_1\frac{(j+1)_2}{2} - \frac{(j)_4}{8},\quad a_{12} = (g(a) + Y^0_2) + Y^0_1\frac{(j+2)_2}{2} - \frac{(j+1)_4}{8},\\
a_{13} &= -(g(a) + Y^0_2) + Y^0_1\frac{(j+3)_2}{2} - \frac{(j+2)_4}{8},\\
a_{21} &= -Y^0_4 + \frac{(j)_4}{8},\quad a_{22} = -Y^0_4 + \frac{(j+1)_4}{8},\quad a_{23} = -Y^0_4 + \frac{(j+2)_4}{8},\\
a_{31} &= -Y^0_3 + \frac{(j+1)_2}{2},\quad a_{32} = -Y^0_3 + \frac{(j+2)_2}{2},\quad a_{33} = -Y^0_3 + \frac{(j+3)_2}{2},\\
b_1 &= -(g(a) + Y^0_2) + Y^0_1\frac{(j)_2}{2} - \frac{(j-1)_4}{8},\\
b_2 &= Y^0_4 -\frac{(j-1)_4}{8},\\
b_3 &= Y^0_3 -\frac{(j)_2}{2}.
\end{split}
\]
Similarly, for $(\alpha_j^*$, $\beta_j^*$, $\gamma_j^*)^T$, we get
\[
\mathbf{A}^*\begin{bmatrix}
\alpha^*_j\\
\beta^*_j\\
\gamma^*_j
\end{bmatrix} = \mathbf{b}^*
\]
with
\[
\begin{split}
a^*_{11} &= (g(b) + Y^0_4) - Y^0_3\frac{(j+1)_2}{2} + \frac{(j)_4}{8},\quad a^*_{12} = (g(b) + Y^0_4) - Y^0_3\frac{(j+2)_2}{2} + \frac{(j+1)_4}{8},\\
a^*_{13} &= (g(b) + Y^0_4) - Y^0_3\frac{(j+3)_2}{2} + \frac{(j+2)_4}{8},\\
a^*_{21} &= Y^0_2 - \frac{(j)_4}{8},\quad a^*_{22} = -Y^0_2 + \frac{(j+1)_4}{8},\quad a^*_{23} = Y^0_2 - \frac{(j+2)_4}{8},\\
a^*_{31} &= -Y^0_1 + \frac{(j+1)_2}{2},\quad a^*_{32} = Y^0_1 - \frac{(j+2)_2}{2},\quad a^*_{33} = -Y^0_1 + \frac{(j+3)_2}{2},\\
b^*_1 &= -(g(b) + Y^0_4) + Y^0_3\frac{(j)_2}{2} - \frac{(j-1)_4}{8},\\
b^*_2 &= Y^0_2 -\frac{(j-1)_4}{8},\\
b^*_3 &= -Y^0_1  + \frac{(j)_2}{2}.
\end{split}
\]

\section{Inner product and discrete inner product}
\label{app2}
The entries of the mass matrix $\mathbf{M}^d$ are given by
\[\mathbf{M}^d_{kj} = \langle\phi^d_k,\psi^d_j\rangle_N^d,\quad k,j=0,\dots ,N-3.
\]
The discrete inner product is equal to the usual $L^2$ inner product for all polynomials up to degree $2N-2$. Since $\phi_k^d$ is a polynomial of degree $k+3$ and $\psi^d_j$ a polynomial of degree $j+3$ we have
\[
 \langle\phi^d_k,\psi^d_j\rangle_N^d = (\phi^d_k,\psi^d_j)\quad \text{for }k+j \leq 2N-8.
\]
This means that all entries of $\mathbf{M}^d_{kj}$ except for $(j,k)=\{(N-4,N-3),(N-3,N-4),(N-3,N-3)\}$ can be analytically pre-computed. For the last three entries the discrete inner product defined in~\eqref{eq27} must be used. This implies that for the last three entries the orthogonality relation between $\phi^d_k$ and $\psi^d_j$ might not hold for the dispersive inner product. However, the bandwidth of the matrix will not change. A similar analysis applies for the stiffness matrix $\mathbf{S}^d$. For $\mathbf{M}^{ad}$ we have
\[
\langle\phi^a_k,\psi^d_j\rangle_N^d = (\phi^a_k,\psi^d_j)\quad \text{for }k+j \leq 2N-5.
\]
The entries $(k,j) = \{(N-1,N-3),(N,N-4),(N,N-3)\}$ must be computed by using the dispersive inner product. The transition matrix $\mathbf{M}^{da}$ is given by
\[
\mathbf{M}^{da}_{kj}=\langle\phi^d_k,\psi^a_j\rangle_N^a = (\phi^d_k,\psi^a_j)\quad \text{for }k+j \leq 2N-2.
\]
Since $0\leq k\leq N-3$ and $0\leq j\leq N$ all entries can be computed analytically. A similar analysis applies for the advection mass matrix $\mathbf{M}^a$.

\section{DLT and IDLT}
\label{app3}
We recall briefly the definitions of DLT and IDLT. For more details we refer the reader to~\cite{hale15}. Given $N+1$ values $\tilde{u}_0, \tilde{u}_1,\dots \tilde{u}_{N}$  the discrete Legendre transform is defined by
\[
u_k = \sum_{n=0}^N \tilde{u}_n L_n(y_k	),\quad 0\leq k\leq N,
\]
where $y_k$ are the roots of the Legendre polynomial $L_{N+1}(y)$. The inverse discrete Legendre transform computes $\tilde{u}_0,\tilde{u}_1,\dots ,\tilde{u}_N$ for given $u_0,u_1,\dots ,u_N$. It takes the form
\[
\tilde{u}_n = \left(n +\frac{1}{2}\right)\sum_{k=0}^N w_k u_k L_n(y_k	),\quad 0\leq n\leq N,
\]
where $w_k$, $k=0,\dots N$ are the Gauss--Legendre quadrature weights. (Notice that $y_k$ and $w_k$ are the same collocation points and weights as defined in the advection inner product~\eqref{eq35}).
Both the DLT and the IDLT can be computed in $\mathcal O(N(\log N)^2/\log \log N)$ operations, see~\cite{hale15}.

For our application, let
\[
\tilde{\mathbf{s}}^{*} := \mathrm{IDLT}( g^*\partial_x u^*)\quad \text{and}\quad \tilde{\mathbf{s}}^{m+1/2} := \mathrm{IDLT}( g^*\partial_x u^{m+1/2}).
\]
Clearly, to compute $\tilde{\mathbf{s}}^{*}$ (and $\tilde{\mathbf{s}}^{m+1/2}$) we need to reconstruct $\partial_x u^*$ (and $\partial_x u^{m+1/2}$) starting from the frequency coefficients $\tilde{\mathbf{u}}^{*,a}$ (and $\tilde{\mathbf{u}}^{m+1/2,a}$). This can be done as follows. Note that we have
\[
\partial_x u^*(x) = \sum_{k=0}^{N} \tilde{u}_{k}^{*,a}\partial_x \phi^a_k(x),\quad \partial_x u^*(x) = \sum_{k=0}^{N} \widetilde{\partial_x u_{k}^{*}}^{a} \phi^a_k(x).
\]
The first relation is obtained by simply taking the derivative with respect to $x$ in~\eqref{eq20c}. The second relation comes from the fact that $\partial_x u^*$ is a polynomial of degree up to $N$ and thus it belongs to $V^{a}_N$. Therefore, it can be written as a linear combination of $V^a_N$ basis functions. Taking the advection inner product in both relations with $\psi^a_j - \psi^a_{j+2}$ for $j=0,\dots ,N,$ gives
\begin{equation}
\label{eq66}
\sum_{k=0}^{N} \tilde{u}_{k}^{*,a}\langle \partial_x \phi^a_k,\psi^a_j - \psi^a_{j+2}\rangle^a_N =  \sum_{k=0}^{N} \widetilde{\partial_x u_{k}^{*}}^{a} \langle \phi^a_k,\psi^a_j - \psi^a_{j+2}\rangle^a_N.
\end{equation}
The choice of the test functions is motivated by the fact that the resulting matrices
\[
\mathbf{F}_{kj} = \langle \partial_x \phi^a_k, \psi^a_j - \psi^a_{j+2} \rangle^a_N\quad \text{and}\quad  \mathbf{G}_{kj} = \langle \phi^a_k,\psi^a_j - \psi^a_{j+2}\rangle^a_N
\]
are both banded matrices with bandwidth two and three, respectively. In matrix form,~\eqref{eq66}  reads

\[
\mathbf{F}^T \tilde{\mathbf{u}}^{*,a} = \mathbf{G}^T \widetilde{\partial_x\mathbf{u}^{*}}^{a},
\]
from which we obtain $\widetilde{\partial_x\mathbf{u}^{*}}^{a}$ in $\mathcal{O}(N)$ operations. A similar procedure applies to the frequency coefficients of $\partial_x u^{m+1/2}$. Finally $\partial_x u^*$ and $\partial_x u^{m+1/2}$ are obtained by applying the DLT to the corresponding frequency coefficients. Summarizing, we have
\begin{align*}
\tilde{\mathbf{s}}^* &= \mathrm{IDLT}\left(g^*\cdot \mathrm{DLT}\left(\mathbf{G}^{-T}\mathbf{F}^T\tilde{\mathbf{u}}^{*,a}\right)\right),\\
\tilde{\mathbf{s}}^{m+1/2} &= \mathrm{IDLT}\left(g^*\cdot \mathrm{DLT}\left(\mathbf{G}^{-T}\mathbf{F}^T\tilde{\mathbf{u}}^{m+1/2,a}\right)\right).
\end{align*}
Thus,~\eqref{eq21} can be solved in $\mathcal O(N(\log N)^2/\log\log N)$ operations for a general function $g^*$.

\end{document}